\newcommand{\ud}{\,\mathrm{d}}
\newcommand{\diam}[0]{\operatorname{diam}}
\newcommand{\abs}[1]{|#1|}
\newcommand{\Babs}[1]{\Big|#1\Big|}
\newcommand{\norm}[2]{|#1|_{#2}}
\newcommand{\bnorm}[2]{\Big|#1\Big|_{#2}}
\newcommand{\Norm}[2]{\|#1\|_{#2}}
\newcommand{\BNorm}[2]{\Big\|#1\Big\|_{#2}}
\newcommand{\pair}[2]{\langle #1,#2 \rangle}
\newcommand{\Bpair}[2]{\Big\langle #1,#2 \Big\rangle}
\newcommand{\bddlin}[0]{\mathscr{L}}
\newcommand{\reals}[0]{\mathbb{R}}
\newcommand{\complex}[0]{\mathbb{C}}
\newcommand{\ints}[0]{\mathbb{Z}}
\renewcommand{\emptyset}[0]{\text{\rm\O}}
\newcommand{\BMO}[0]{\operatorname{BMO}}
\newcommand{\cone}[0]{\Gamma}
\newcommand{\prob}[0]{\mathbb{P}}
\newcommand{\Exp}[0]{\mathbb{E}} 
\newcommand{\locally}[0]{\operatorname{loc}}
\newcommand{\radem}[0]{\varepsilon}
\newcommand{\rbound}[0]{\mathscr{R}}
\newcommand{\gbound}[0]{\mathscr{R}_{\gamma}}
\numberwithin{equation}{section}
  \let\c@equation\c@subsection
\theoremstyle{plain}
\newtheorem{theorem}[subsection]{Theorem}
\newtheorem{proposition}[subsection]{Proposition}
\newtheorem{corollary}[subsection]{Corollary}
\newtheorem{lemma}[subsection]{Lemma}
\theoremstyle{definition}
\theoremstyle{remark}
\title[Banach space -valued BMO]{The Banach space -valued BMO, Carleson's condition, and paraproducts}
\author{Tuomas P.\ Hyt\"onen}
\address{Department of Mathematics and Statistics\\ University of Helsinki\\ Gustaf H\"allstr\"omin katu 2b\\ FI-00014 Helsinki\\ Finland}
\email{tuomas.hytonen@helsinki.fi}
\author{Lutz Weis}
\address{Institut f\"ur Analysis\\ Universit\"at Karlsruhe\\ Kaiserstra\ss{e} 89\\ D-76133 Karls\-ruhe\\ Germany}
\email{lutz.weis@math.uni-karlsruhe.de}
\date{\today}
\keywords{Bounded mean oscillation, square function, conical, Carleson embedding}
\subjclass[2000]{42B35 (primary); 42B20, 42B25, 46E40 (secondary)}
\begin{document}

\maketitle

\begin{abstract}
We define a scale of \(L^q\) Carleson norms, all of which characterize the membership of a function in BMO. The phenomenon is analogous to the John--Nirenberg inequality, but on the level of Carleson measures. The classical Carleson condition corresponds to the \(L^2\) case in our theory.

The result is applied to give a new proof for the \(L^p\)-boundedness of paraproducts with a BMO symbol. A novel feature of the argument is that all \(p\in(1,\infty)\) are covered at once in a completely interpolation-free manner. This is achieved by using the \(L^1\) Carleson norm, and indicates the usefulness of this notion. Our approach is chosen so that all these results extend in a natural way to the case of \(X\)-valued functions, where \(X\) is a Banach space with the UMD property.
\end{abstract}

\section{Introduction}

In order to describe properties of a function \(f\) on \(\reals^n\), it is often fruitful to consider its resolution \(F(x,t)=f*\psi_t(x)\), where \(\psi\) is an appropriate auxiliary function and \(\psi_t(x):=t^{-n}\psi(t^{-1}x)\). Then the value of \(F(x,t)\) tells something about the behaviour of \(f\) near the point \(x\in\reals^n\) and at the length scale of \(t\in(0,\infty)\).

It is well known that, for appropriate \(\psi\), the membership of \(f\) in various function spaces is encoded in a useful form in the size properties of \(F\), which typically involve some kind of a quadratic norm in the scale variable \(t\). For instance, the \(L^p\) norms in the reflexive range \(p\in(1,\infty)\) satisfy
\begin{equation}\label{eq:LpSq}\begin{split}
  \Norm{f}{L^p(\reals^n)}
  &\eqsim\Big(\int_{\reals^n}\Big[\int_0^{\infty}\abs{F(x,t)}^2\frac{\ud t}{t}\Big]^{p/2}\ud x\Big)^{1/p} \\
  &\eqsim\Big(\int_{\reals^n}\Big[\iint_{\abs{y-x}<t}\abs{F(y,t)}^2\frac{\ud y\ud t}{t^{n+1}}\Big]^{p/2}\ud x\Big)^{1/p},
\end{split}\end{equation}
where we refer to the first quadratic expression as a ``vertical'', and to the second as a ``conical'' square function, for obvious geometric reasons.

For the end-point space \(\BMO\) there holds in turn
\begin{equation}\label{eq:BMOSq}\begin{split}
  \Norm{f}{\BMO(\reals^n)}
  & \eqsim\sup_B\Big(\frac{1}{\abs{B}}\int_{B}\int_0^{r(B)}\abs{F(x,t)}^2\frac{\ud t}{t}\ud x\Big)^{1/2} \\
  & \eqsim\sup_B\Big(\frac{1}{\abs{B}}\int_{B}\iint_{\abs{y-x}<t<r(B)}\abs{F(y,t)}^2\frac{\ud y\ud t}{t^{n+1}}\ud x\Big)^{1/2},
\end{split}\end{equation}
where the supremum is taken over all balls \(B\subset\reals^n\) and \(r(B)\) is the radius of \(B\). Observe that, as opposed to \eqref{eq:LpSq} which really states two different non-trivial theorems, the equivalence of the ``vertical'' and ``conical'' forms in \eqref{eq:BMOSq} is completely elementary. The finiteness of the middle term in \eqref{eq:BMOSq} is the so-called Carleson condition for the measure \(\ud\mu(x,t)=\abs{F(x,t)}^2\ud x\ud t/t\).

When looking for generalizations of these results for Banach space -valued functions \(f:\reals^n\to X\), it has been known for some time that the quadratic norms should be reformulated in a ``randomized'' way (which we explain in more detail in Section~\ref{ss:square} below), and the Banach space \(X\) should satisfy the so-called unconditionality property of martingale differences (UMD). For such spaces, descriptions of the \(L^p(\reals^n;X)\) norm in terms of discrete versions of the randomized vertical square functions date back to the works of Bourgain~\cite{Bourgain} and McConnell~\cite{McConnell} in the 80's. More recently, also continuous-parameter quadratic estimates for \(f\in L^p(\reals^n;X)\) have been developed, both in the vertical \cite{Hytonen:LPS,KaiserWeis,KW} and the conical \cite{HNP} forms.

The aim of the present paper is to take up the BMO aspect of things in the vector-valued, continuous-parameter setting. (A discrete square function characterization of the vector-valued BMO has been previously given in \cite{Hytonen:H1} in terms of wavelet expansions.) While the two forms of the square function in \eqref{eq:BMOSq} are comparable, it will make a difference for vector-valued functions whether we take one or the other as the basis of the randomization procedure. Here we have chosen to work with the conical version, since it seems that this leads to the most satisfactory analogy with the classical results as presented e.g.~in \cite{Stein}.

As the exponent \(2\) plays no particular r\^ole for Banach space -valued functions \(F:\reals^{n+1}_+\to X\), we define in Section~\ref{sec:AC} a family of ``Carleson-type'' quantities \(C_q(F)(x)\), with \(x\in\reals^n\) and \(q\in(0,\infty)\), in such a way that \(C_2(F)(x)\), for a scalar-valued \(F\), reduces to the right side of \eqref{eq:BMOSq} but with the supremum only being over those balls \(B\) which contain \(x\). We can then prove the following characterization, extending the classical result (corresponding to \(X=\complex\) and \(q=2\)), which is found e.g.\ in \cite{Stein}, Theorem~IV.3.
The cases \(q\neq 2\) are apparently new even for \(X=\complex\); besides their appearance in this characterization, we believe that our new functionals \(C_q\) may find other uses in the theory of ``tent spaces'' of Coifman, Meyer and Stein \cite{CMS}; cf.\ the remark between Theorem~\ref{thm:AC} and its proof.

\begin{theorem}\label{thm:charBMO}
Let \(X\) be a UMD space. Let \(f\in L^1_{\locally}(\reals^n;X)\) with
\begin{equation*}
  \int_{\reals^n}\frac{\norm{f(x)}{X}}{(1+\abs{x})^{n+1}}\ud x<\infty,
\end{equation*}
and denote \(F(x,t):=f*\psi_t(x)\), where \(\psi\in\mathscr{S}(\reals^n)\) has vanishing integral.
\begin{itemize}
  \item If \(f\in\BMO(\reals^n;X)\), then \(C_q(F)\in L^{\infty}(\reals^n)\) for all \(q\in(0,\infty)\).
  \item If \(C_q(F)\in L^{\infty}(\reals^n)\) for some \(q\in(0,\infty)\) and \(\psi\) is non-degenerate in the sense of Section~\ref{ss:testf} below, then \(f\in\BMO(\reals^n;X)\).
\end{itemize}
Moreover, we have for all \(q\in(0,\infty)\) the equivalence of norms
\begin{equation*}
  \Norm{f}{\BMO(\reals^n;X)}
  \eqsim\Norm{C_q(F)}{L^{\infty}(\reals^n)}.
\end{equation*}
\end{theorem}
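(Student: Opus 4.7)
The plan is to establish $\Norm{f}{\BMO(\reals^n;X)}\eqsim\Norm{C_q(F)}{L^{\infty}(\reals^n)}$ for one convenient value of \(q\), which I would take to be \(q=2\), and then to deduce the same equivalence for every other \(q\in(0,\infty)\) from Theorem~\ref{thm:AC}, whose very purpose is the mutual comparability of the \(C_q\)'s; this is where the John--Nirenberg-on-Carleson-measures content enters. The exponent \(q=2\) is convenient because the scalar definition of \(C_2\) reduces to the truncated conical Carleson integral on the right of~\eqref{eq:BMOSq}, and because the randomized, \(X\)-valued conical square function at exponent \(2\) fits directly into the tent-space machinery of~\cite{HNP}.

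For the implication \(f\in\BMO\Rightarrow C_2(F)\in L^{\infty}\), fix a ball \(B\) and a point \(x\in B\), and decompose
\begin{equation*}
  f = c_{B} + (f-c_{B})\mathbf{1}_{2B} + (f-c_{B})\mathbf{1}_{(2B)^c}
    =: c_{B}+g+h,
\end{equation*}
where \(c_{B}\) is the average of \(f\) on \(2B\). The constant piece drops out because \(\psi\) has mean zero. For the local piece \(g\), the truncated conical Carleson integral above \(B\) is majorized by the full randomized \(X\)-valued conical \(L^2\) square function of \(g\ast\psi_t\); the theorem of~\cite{HNP} bounds the latter by \(\Norm{g}{L^2(\reals^n;X)}\), and after division by \(\abs{B}\) the vector-valued John--Nirenberg inequality converts this into \(\lesssim\Norm{f}{\BMO(\reals^n;X)}\). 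For the distant piece \(h\), the cancellation and smoothness of \(\psi\) produce the standard pointwise estimate
\begin{equation*}
  \Norm{h\ast\psi_t(y)}{X}
  \lesssim t\sum_{k\geq 1}2^{-k}\frac{1}{\abs{2^k B}}\int_{2^k B}\Norm{f(z)-c_{B}}{X}\ud z
\end{equation*}
for \((y,t)\) in the truncated cone over \(B\); squaring and integrating against \(\ud y\ud t/t^{n+1}\) on the cone absorbs the factor of \(t\) and yields the required bound by \(\Norm{f}{\BMO(\reals^n;X)}^2\).

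For the converse, assume \(C_2(F)\in L^{\infty}\) and that \(\psi\) is non-degenerate. I would recover \(f\) modulo constants from \(F\) by a Calder\'on reproducing identity \(f=\int_0^{\infty}F(\cdot,t)\ast\tilde\psi_t\,\ud t/t\), the mean-zero \(\tilde\psi\in\mathscr{S}(\reals^n)\) being supplied by non-degeneracy. For a fixed ball \(B\) the \(t\)-integral is split at \(t=2r(B)\). The far-field part is Lipschitz on \(B\) with the required oscillation bound coming from the tail hypothesis \(\int\Norm{f(x)}{X}(1+\abs{x})^{-n-1}\ud x<\infty\). The near-field part is estimated via randomized, \(X\)-valued \(T^{1,2}\)--\(T^{\infty,2}\) tent-space duality (the UMD-valued analogue of the Coifman--Meyer--Stein pairing~\cite{CMS}): the tent-space atom associated with \(B\) places, on one side, the normalized oscillation average that defines \(\Norm{f}{\BMO(\reals^n;X)}\) locally, and, on the other side, precisely the truncated conical \(L^2\) Carleson integral whose supremum is \(\Norm{C_2(F)}{L^{\infty}}\).

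The main obstacle I anticipate is the bookkeeping of the randomization: every ``\(L^2\)'' and every ``square'' above has to be reinterpreted as a Rademacher or Gaussian square-function norm in \(X\), and the very definition of \(C_q(F)\) is a randomized, not a genuinely quadratic, quantity. A subsidiary worry is that the \(L^p\)-conical square function estimate of~\cite{HNP}, applied to the local piece \(g\), might force a specific exponent \(p\) whose constants would pollute the eventual \(L^{\infty}\) bound on \(C_2(F)\); this danger is defused by the fact that Theorem~\ref{thm:AC} is invoked only at the very end, so that one is free to pick whichever interior \(p\) (and therefore whichever value of \(q\)) makes the local estimate cleanest, and the resulting Carleson bound is then propagated to all \(q\in(0,\infty)\).
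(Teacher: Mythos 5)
Your plan to establish the equivalence only for \(q=2\) and then ``deduce the same equivalence for every other \(q\in(0,\infty)\) from Theorem~\ref{thm:AC}'' has a genuine gap. Theorem~\ref{thm:AC} compares the \(L^p\) norms of \(A^{(\alpha)}(F)\) and \(C_q^{(\alpha)}(F)\) for \(p\in(1,\infty)\) (part (a) requires \(p<\infty\); part (b) with \(p=\infty\) only gives \(\Norm{C_q(F)}{L^\infty}\lesssim\Norm{A(F)}{L^\infty}\), which involves untruncated cones and is not useful here). It does not give any \(L^\infty\) comparison between \(C_q\) and \(C_{q'}\) for \(q\neq q'\). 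The only cheap monotonicity is Jensen's \(C_q\leq C_{q'}\) pointwise for \(q<q'\); that handles the easy half. For the hard half --- passing from \(C_q(F)\in L^\infty\) with small \(q\) to \(C_{q'}(F)\in L^\infty\) with large \(q'\), which is the actual John--Nirenberg content --- one must travel through \(\BMO\) itself, not through Theorem~\ref{thm:AC}. The paper's logic is: prove the \emph{forward} implication \(f\in\BMO\Rightarrow C_q(F)\in L^\infty\) directly for \(q>1\) (using Theorem~\ref{thm:LpTp} in \(L^q\)) and extend to \(q\leq1\) by monotonicity; prove the \emph{backward} implication \(C_q(F)\in L^\infty\Rightarrow f\in\BMO\) directly for \emph{every} \(q\in(0,\infty)\), using the \(A\)--\(C_q\) duality inequality of Theorem~\ref{thm:duality}, which holds for all \(q\). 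Both implications must be done for general \(q\), and they cannot be transferred after the fact from the single case \(q=2\).

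For the converse direction your proposed route --- Calder\'on reproducing, split at \(t\sim r(B)\), far-field Lipschitz, near-field via a \(T^{1,2}\)--\(T^{\infty,2}\) pairing --- is in spirit the scalar argument of Stein, and with enough care it can probably be pushed through in the vector-valued setting. The paper takes a shorter path: it dualizes against a dense class of \(g\in H^1(\reals^n;X')\), writes \(\int\pair{f}{g}\ud x=\iint\pair{f*\psi_t}{g*\phi_t}\ud x\,\ud t/t\) via the Calder\'on formula (here the complementary \(\phi\) supplied by non-degeneracy enters), applies Theorem~\ref{thm:duality} followed by \(\Norm{C_q(F)}{L^\infty}\Norm{A(G)}{L^1}\), bounds \(\Norm{A(G)}{L^1}\lesssim\Norm{g}{H^1(\reals^n;X')}\) by Corollary~\ref{cor:H1T1}, and concludes from the vector-valued \(H^1\)--\(\BMO\) duality. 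This avoids ever estimating the oscillation of \(f\) on an individual ball and keeps all the \(q\)-dependence confined to Theorem~\ref{thm:duality}, which is exactly why the argument works uniformly in \(q\).

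Two smaller cautions in your forward estimate. First, for the distant piece you write a pointwise bound on \(\Norm{h*\psi_t(y)}{X}\) and then propose to ``square and integrate''; but the quantity one must control is \(\Norm{h*\psi_t\cdot 1_{\cone^{r(B)}(y)}}{\gamma(X)}\), and in general \(\Norm{F}{\gamma(X)}\neq\Norm{\,\norm{F}{X}\,}{L^2}\) outside Hilbert spaces. The paper handles this by the integral triangle inequality for the \(\gamma\)-norm: pull the \(z\)-integral outside, and for each fixed \(z\) the integrand is a scalar function times a fixed vector of \(X\), so its \(\gamma\)-norm factors as \(\norm{f(z)-f_{3B}}{X}\) times a scalar \(L^2\) norm, which can then be estimated by the decay of \(\psi\). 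Second, note that your worry about ``which interior \(p\) might pollute the \(L^\infty\) bound on \(C_2(F)\)'' is misplaced in the sense that it cannot be defused by Theorem~\ref{thm:AC}: you should simply take \(p=q\) in Theorem~\ref{thm:LpTp} for the local piece, as the paper does, which is why the argument is restricted there to \(q\in(1,\infty)\).
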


One of the key r\^oles played by the BMO space is in connection to the celebrated \(T(1)\) and \(T(b)\) theorems, and in particular in connection to the so-called paraproduct operators which make a decisive ingredient in the proof of these theorems. Let us define the paraproduct of the functions \(f\) and \(u\) first as the formal expression
\begin{equation*}
  P(f,u):=\int_0^{\infty}\psi_t*[(\psi_t*f)(\phi_t*u)]\frac{\ud t}{t},
\end{equation*}
where \(\psi,\phi\in\mathscr{S}(\reals^n)\) are fixed functions, the former one with a vanishing integral. As for the convergence issues, it is useful to think of \(P(f,u)\) as a linear functional on a suitable function space, its action on \(g\) being given by
\begin{equation}\label{eq:paraproduct}\begin{split}
  \pair{P(f,u)}{g}&:=\int_0^{\infty}\pair{\psi_t*[(\psi_t*f)(\phi_t*u)]}{g}\frac{\ud t}{t}.
\end{split}\end{equation}
The ``Carleson-type'' characterization of \(\BMO(\reals^n;X)\) from Theorem~\ref{thm:charBMO}, together with a conical square function description of \(L^p(\reals^n;X)\) from \cite{HNP}, allow a new and clean proof of the following basic mapping property of the paraproduct.

\begin{theorem}\label{thm:paraproduct}
Let \(X\) be a UMD space, and \(p\in(1,\infty)\). Let \(f\in\BMO(\reals^n;X)\) and \(u\in L^p(\reals^n)\). Then \(P(f,u)\in L^p(\reals^n;X)\) with
\begin{equation*}
  \Norm{P(f,u)}{L^p(\reals^n;X)}
  \lesssim\Norm{f}{\BMO(\reals^n;X)}\Norm{u}{L^p(\reals^n)},
\end{equation*}
in the sense that the integral in~\eqref{eq:paraproduct} converges absolutely for all \(g\in L^{p'}(\reals^n;X')\) and defines a linear functional on \(L^{p'}(\reals^n;X')\) with the mentioned norm bound.
\end{theorem}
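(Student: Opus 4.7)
My plan is to dualize, reduce $\pair{P(f,u)}{g}$ to a single integral on $\reals^{n+1}_+$, and invoke the $L^1$ Carleson characterization of $\BMO$ from Theorem~\ref{thm:charBMO} together with the classical Carleson embedding. The point of working with the $L^1$ (rather than the $L^2$) Carleson norm is that the resulting estimate splits cleanly into a Carleson factor carrying $\Norm{f}{\BMO(\reals^n;X)}$ and a non-tangential maximal factor carrying $u$ and $g$; both are controlled by operators bounded simultaneously on the whole range $p\in(1,\infty)$, which is the source of the interpolation-free nature advertised in the abstract.

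Fix $g\in L^{p'}(\reals^n;X')$. Shifting one $\psi_t$-convolution onto $g$ via its reflection $\tilde\psi(x):=\psi(-x)$ recasts the duality pairing (at first formally) as
\[
\pair{P(f,u)}{g}
=\int_0^\infty\int_{\reals^n}\pair{F(y,t)}{\tilde G(y,t)}\,U(y,t)\,\frac{\ud y\,\ud t}{t},
\]
with $F:=\psi_t*f$, $U:=\phi_t*u$, and $\tilde G:=\tilde\psi_t*g$. The integrand is pointwise dominated by the non-negative scalar $\Norm{F(y,t)}{X}\,|U(y,t)|\,\Norm{\tilde G(y,t)}{X'}$, so it suffices to bound the corresponding positive double integral.

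Regard $\ud\mu(y,t):=\Norm{F(y,t)}{X}\,\ud y\,\ud t/t$ as a positive measure on $\reals^{n+1}_+$. A routine Fubini (of the type relating the two lines of~\eqref{eq:BMOSq}) identifies $\sup_B\mu(T(B))/|B|$ with $\Norm{C_1(F)}{L^\infty(\reals^n)}$, and the latter is $\lesssim\Norm{f}{\BMO(\reals^n;X)}$ by Theorem~\ref{thm:charBMO} applied with $q=1$. The classical $L^1$ Carleson embedding now asserts that, for every non-negative Borel $H$ on $\reals^{n+1}_+$,
\[
\iint H(y,t)\,\ud\mu(y,t)\lesssim\Norm{f}{\BMO(\reals^n;X)}\int_{\reals^n}NH(x)\,\ud x,
\qquad NH(x):=\sup_{(y,t)\in\cone(x)}H(y,t).
\]
Taking $H=|U|\Norm{\tilde G}{X'}$, using $NH\le NU\cdot N(\Norm{\tilde G}{X'})$ together with the standard pointwise bounds $NU(x)\lesssim Mu(x)$ and $N(\Norm{\tilde G}{X'})(x)\lesssim M(\Norm{g(\cdot)}{X'})(x)$ for Schwartz convolutions (with $M$ the Hardy--Littlewood maximal operator), and finally H\"older together with the scalar $L^p,L^{p'}$-boundedness of $M$, I obtain
\[
\int_{\reals^n}NH\,\ud x\lesssim\Norm{u}{L^p(\reals^n)}\,\Norm{g}{L^{p'}(\reals^n;X')}.
\]
Chaining the two estimates proves the norm bound; running the same chain on absolute values throughout secures the absolute convergence asserted in~\eqref{eq:paraproduct}.

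I do not anticipate a hard step; the argument is essentially the assembly of Theorem~\ref{thm:charBMO} (with $q=1$) and the $L^1$ Carleson embedding. The only mild points of care are the Fubini identification of the cone-based $C_1$ with the tent Carleson norm of $\mu$, the classical majorization of Schwartz convolutions by $M$, and, for absolute convergence, first running the argument on a dense subclass of $u,g$ where the integrals are manifestly finite and then closing by density. The UMD hypothesis on $X$ enters only through its role in Theorem~\ref{thm:charBMO}.
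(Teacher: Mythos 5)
Your overall architecture (dualize, pass to an integral over $\reals^{n+1}_+$, split off a Carleson factor for $f$ and a maximal-function factor for $u$ and $g$) matches the paper's, but the central step is broken. You bound $|\pair{F}{\tilde G}|\le\Norm{F}{X}\Norm{\tilde G}{X'}$ pointwise and then treat $\ud\mu(y,t)=\Norm{F(y,t)}{X}\,\ud y\,\ud t/t$ as a Carleson measure whose tent norm $\sup_B\mu(T(B))/|B|$ you identify with $\Norm{C_1(F)}{L^\infty}$. That identification is false. The paper's $C_1(F)(x)$ is the $L^1$ average over $y\in B$ of $A(F|r(B))(y)=\Norm{F\cdot 1_{\cone^{r(B)}(y)}}{\gamma(X)}$, i.e.\ of a \emph{Gauss (square-function) norm} over the truncated cone; after Fubini, $\mu(T(B))/|B|$ is instead the $L^1$ average over $y\in B$ of $\iint_{\cone^{r(B)}(y)}\Norm{F}{X}\,\ud z\,\ud t/t^{n+1}$, an $L^1$ integral over the cone. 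These are not comparable (the measure $\ud z\,\ud t/t^{n+1}$ of a truncated cone is infinite, so there is no Cauchy--Schwarz passage from one to the other), and the quantity you actually need is \emph{not} controlled by $\Norm{f}{\BMO}$: already for scalar lacunary $f=\sum_{k\le K}K^{-1/2}\cos(2^kx)$ one has $\Norm{f}{\BMO}\lesssim1$ while $\iint_{T(B(0,1))}|F|\,\ud y\,\ud t/t\gtrsim K^{1/2}$. This is exactly the distinction the paper flags when it notes that its $C_q$ scale differs from the Coifman--Meyer--Stein one except at $q=2$: your $\mu$-condition is the CMS-type $L^1$ Carleson condition, not the paper's $C_1$, and only the latter characterizes $\BMO$.

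The deeper issue is that replacing $|\pair{F}{\tilde G}|$ by $\Norm{F}{X}\Norm{\tilde G}{X'}$ discards the square-function structure that the whole vector-valued theory is built to preserve; the admissible substitute for pointwise Cauchy--Schwarz is the Gauss-norm duality of Proposition~\ref{prop:duality}, applied over truncated cones. The paper's proof accordingly runs through Theorem~\ref{thm:duality} (a stopping-time argument giving $\iint|\pair{UF}{G}|\,\ud y\,\ud t/t\lesssim\int C_q(U\cdot F)\,A(G)\,\ud x$) and then Theorem~\ref{thm:Carleson} (the Carleson embedding $\Norm{C_q(U\cdot F)}{L^p}\lesssim\Norm{N^{(\alpha)}(U)}{L^p}\Norm{C_q(F)}{L^\infty}$), finishing with $N^{(\alpha)}(U)\lesssim Mu$ and $\Norm{A(G)}{L^{p'}}\lesssim\Norm{g}{L^{p'}}$ (the latter needs Theorem~\ref{thm:LpTp}, hence UMD again --- your claim that UMD enters only via Theorem~\ref{thm:charBMO} would also need revisiting). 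Your maximal-function endgame survives in spirit (it is where $N^{(\alpha)}(U)\lesssim Mu$ appears), but without the duality and embedding theorems the Carleson half of your estimate has no valid substitute.
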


Results of this flavour are already known for different versions of the vector-valued paraproduct; see \cite{Figiel,Hytonen:Tb,HMP,HW:T1}. (There is no canonical definition, even in the scalar-valued case, but the word `paraproduct' is generically used for various bilinear objects with a structure similar to \(P(f,u)\) above.) While the present proof has some elements in common with the previous ones, its advantage is the complete freedom from interpolation. We have also made an effort to choose our definitions and the set-up in such a way that the vector-valued theory parallels as much as possible the classical approach in the scalar case.

The rest of the paper is organized as follows: Section~\ref{sec:prelim} contains preliminary material and sets up some basic notation. In Section~\ref{sec:AC} we define and discuss our vector-valued versions of the \(A\) and \(C\) functionals of Coifman, Meyer and Stein \cite{CMS}, in terms of which we formulate our ``quadratic estimates''. Several basic results concerning these functionals are provided in Section~\ref{sec:ACest}. This preparation done, we are ready to prove Theorem~\ref{thm:charBMO} in Section~\ref{sec:charBMO}. In the final Section~\ref{sec:Carleson} we prove a version of the Carleson embedding theorem from which Theorem~\ref{thm:paraproduct} follows.

\subsection{Acknowledgement}
T.~Hyt\"onen is supported by the Academy of Finland (project 114374 ``Vector-valued singular integrals'').

\section{Preliminaries}\label{sec:prelim}

\subsection{Basic geometry}

The upper half-space is \(\reals^{n+1}_+:=\reals^n\times(0,\infty)\), whose points are usually denoted by \((x,t)\) or \((y,t)\) with \(x,y\in\reals^n\) and \(t\in(0,\infty)\). We write \(B(x,r):=\{y\in\reals^n:\abs{y-x}<r\}\) for the ball of centre \(x\) and radius \(r\), and given a ball \(B\), we denote by \(c(B)\) its centre and by \(r(B)\) its radius. The notation \(B\owns x\), e.g. in connection with a supremum, means that \(B\) runs over all (open) balls which contain the point \(x\).

A cube in \(\reals^n\) is a set of the form \(Q=x+[-h/2,h/2[^n\) where \(x\in\reals^n\) and \(h\in(0,\infty)\), and we write \(c(Q)=x\) and \(\ell(Q)=h\) for its centre and side-lenght, and \(\diam(Q)=\sqrt{n}\ell(Q)\) for the diameter. Given \(\alpha\in(0,\infty)\), a ball \(B\) and a cube \(Q\), we denote by \(\alpha B\) and \(\alpha Q\) the ball and the cube having the same centres as \(B\) and \(Q\), respectively, and \(r(\alpha B)=\alpha r(B)\), \(\ell(\alpha Q)=\alpha\ell(Q)\).

The cone of base \(x\in\reals^n\) and aperture \(\alpha\in(0,\infty)\), and its truncation at height \(h\in(0,\infty)\), are denoted by
\begin{equation*}
  \cone_{\alpha}(x) :=\{(y,t)\in\reals^{n+1}_+:\abs{y-x}<\alpha t\},\qquad
  \cone^h_{\alpha}(x) :=\{(y,t)\in\cone(x):t<h\}.
\end{equation*}
We also abbreviate \(\cone(x):=\cone_1(x)\) and similarly with the truncated version.

A \emph{Whitney decomposition} of an open set \(G\subset\reals^n\) with non-empty complement is a collection of disjoint cubes \(\{Q_j\}_{j=1}^{\infty}\) such that \(\bigcup_{j=1}^{\infty}Q_j=G\), and \(\diam(Q_j)<d(Q_j,G^c)\leq 4\diam(Q_j)\) for all \(j\). For instance we may choose \(\{Q_j\}_{j=1}^{\infty}\) to consist of all maximal cubes in \(\{Q_x:x\in G\}\), where \(Q_x\) is the smallest dyadic \(Q\owns x\) such that \(d(Q,G^c)\leq 4\diam(Q)\).

\subsection{``Square functions'' in Banach spaces}\label{ss:square}

Let us denote by \(\gamma_j\) a sequence of independent complex standard Gaussian random variables on some probability space \((\Omega,\mathscr{A},\prob)\).
Let \(H\) be a separable complex Hilbert space, and \(X\) a complex Banach space.

The \emph{Gauss space} \(\gamma(H,X)\) consists of those \(T\in\bddlin(H,X)\) such that for some (and then any) orthonormal basis \((e_j)_{j=1}^{\infty}\) of \(H\) the series \(\sum_{j=1}^{\infty}\gamma_j Te_j\) converges in \(L^2(\Omega;X)\). The norm in \(\gamma(H,X)\) is defined by
\begin{equation*}
  \Norm{T}{\gamma(H,X)}:=\BNorm{\sum_{j=1}^{\infty}\gamma_j Te_j}{L^2(\Omega;X)},
\end{equation*}
which is independent of the orthonormal basis \((e_j)_{j=1}^{\infty}\). See \cite{NW} for more details.

We can interpret $X$ as a subspace of $\gamma(H,X)$, and $\gamma(H,X)$ as a subspace of $L^2(\Omega;X)$ by identifying $x\in X$ with, say, the operator $h\mapsto x(h,e_1)$, and $T\in\gamma(H,X)$ with the function $\sum_{j=1}^{\infty}\gamma_j Te_j$.

Let then \(H=L^2(M,\mathscr{M},\mu)\). We say that \(f:M\to X\) is weakly \(L^2\) if \(\pair{f(\cdot)}{x'}:M\to\complex\) is in \(L^2(\mu)\) for all \(x'\in X'\). If this is the case, then \(f\cdot h:M\to X\) (pointwise product of functions) is Pettis-integrable for all \(h\in L^2(\mu)\), and there is a bounded operator (\cite{Pettis}, Theorem~3.4)
\begin{equation*}
  I_f\in\bddlin(H,X),\qquad I_f h
  :=\text{Pettis-}\int_{M}f\cdot h\ud\mu.
\end{equation*}
If it happens that \(I_f\in\gamma(H,X)\), then with slight abuse of notation we write \(f\in\gamma(H,X)\), and \(\Norm{f}{\gamma(H,X)}:=\Norm{I_f}{\gamma(H,X)}\). If \(I_f\notin\gamma(H,X)\), or if \(f\) is not even weakly \(L^2\), then we denote \(\Norm{f}{\gamma(H,X)}:=\infty\). The Gauss norm \(\Norm{f}{\gamma(H,X)}\) will be thought of as the ``square function'' of the Banach space -valued function \(f:M\to X\). Note that if \(X=\complex\), then  \(\Norm{f}{\gamma(H,X)}=\Norm{f}{H}=\Norm{f}{L^2(\mu)}\). See~\cite{NW} for further motivation of why this is a natural definition of a ``square function''. 

We need a result about pointwise multipliers on \(\gamma(H,X)\). 
A collection \(\mathscr{T}\subset\bddlin(X,Y)\) is called \emph{Gauss-bounded} if there holds
\begin{equation*}
  \Exp\BNorm{\sum_{j=1}^N\gamma_j T_j\xi_j}{Y}^2
  \leq C^2\Exp\BNorm{\sum_{j=1}^N\gamma_j\xi_j}{X}^2
\end{equation*}
for all \(N\in\ints_+\), \(\xi_1,\ldots,\xi_N\in X\) and \(T_1,\ldots,T_N\in\mathscr{T}\). The smallest admissible \(C\) is denoted by \(\gbound(\mathscr{T})\); if \(\mathscr{T}\) is the range of an operator-valued function \(g\), we also write \(\gbound(g):=\gbound(\mathscr{T})\). (In a number of other papers, the notation \(\gamma(\mathscr{T})\) has been used instead of \(\gbound(\mathscr{T})\); however, we decided to change this in order to avoid overloading the symbol \(\gamma\), and to emphasize the connection to the closely related notion of the \emph{Rademacher-bound}, which is defined with random signs \(\radem_j\) in place of the \(\gamma_j\) and denoted by \(\rbound(\mathscr{T})\).)

\begin{proposition}[\cite{KW}]\label{prop:multiplier}
Let \(f\in \gamma(H,X)\), and \(g:M\to\bddlin(X,Y)\) be strongly measurable with Gauss-bounded range. Then \(g\cdot f\in\gamma(H,Y)\) and \(\Norm{g\cdot f}{\gamma(H,Y)}\leq\gbound(g)\Norm{f}{\gamma(H,X)}\).
\end{proposition}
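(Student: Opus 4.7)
The plan is a two-step approach: verify the inequality first for simple operator-valued $g$ by direct application of the $\gbound$ definition to a partition-adapted orthonormal basis, and then pass to the limit for general $g$ via a Fatou-type argument.

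First I would assume $g=\sum_{k=1}^N \mathbf{1}_{B_k} T_k$ is a step function, with disjoint measurable sets $(B_k)_{k=1}^N$ covering $M$ (augmenting by $T_0=0$ on the complement if necessary) and with values $T_k$ lying in the essential range of $g$, so that $\gbound(\{T_k\}_{k=1}^N)\le\gbound(g)$. Choosing an orthonormal basis $(e_j^k)_{j\ge 1}$ of each subspace $L^2(B_k,\mu)\subset H$ yields an ONB $(e_j^k)_{k,j}$ of $H$ subordinate to the partition. Linearity of the Pettis integral and the fact that $e_j^k$ is supported in $B_k$ give the key identity
\[
I_{gf}e_j^k=T_k\,\text{Pettis-}\!\int_{B_k} f\cdot e_j^k\ud\mu=T_k I_f e_j^k.
\]
Applying the definition of $\gbound$ to the operators $T_1,\dots,T_N$ (used with multiplicity) against the vectors $I_f e_j^k\in X$ then yields, for any finite truncation,
\[
\Exp\BNorm{\sum_{k,j}\gamma_{kj}T_k I_f e_j^k}{Y}^2\le\gbound(g)^2\,\Exp\BNorm{\sum_{k,j}\gamma_{kj}I_f e_j^k}{X}^2\le\gbound(g)^2\Norm{f}{\gamma(H,X)}^2.
\]
Since this bound is uniform in the truncation and $f\in\gamma(H,X)$, the Gaussian series on the left is Cauchy in $L^2(\Omega;Y)$, so $gf\in\gamma(H,Y)$ with the asserted norm bound whenever $g$ is a step function.

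For general $g$, strong measurability provides step functions $g_n\colon M\to\bddlin(X,Y)$ with values in the essential range of $g$ and with $g_n(m)x\to g(m)x$ in $Y$ for every $x\in X$ and $\mu$-a.e.\ $m$; these automatically satisfy $\gbound(g_n)\le\gbound(g)$, so the step-function case gives the uniform bound $\Norm{g_n f}{\gamma(H,Y)}\le\gbound(g)\Norm{f}{\gamma(H,X)}$. By dominated convergence the Pettis integrals satisfy $I_{g_n f}h\to I_{gf}h$ weakly in $Y$ for each $h\in H$, and one then invokes a Fatou-type lower semicontinuity of $\Norm{\cdot}{\gamma(H,Y)}$ under weak operator convergence to conclude $gf\in\gamma(H,Y)$ with the desired bound. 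This final passage to the limit is the main technical step, but it is by now a routine ingredient in the Kalton--Weis theory of $\gamma$-radonifying operators.
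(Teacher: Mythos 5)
The paper does not actually prove this proposition itself: it is stated with the attribution \cite{KW} and used as a black box, and the detached proof appended after the bibliography in the source belongs to Proposition~\ref{prop:duality} (the duality estimate), not to this multiplier statement. So there is no in-paper argument to compare against; I can only assess your proposal on its own terms, and it does follow the standard Kalton--Weis route (step functions adapted to a partition of $M$, then a limiting argument).

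The step-function half is essentially right. The identity $I_{gf}e_j^k=T_kI_fe_j^k$ and the resulting Gaussian estimate are exactly what is needed. One caution on wording: the bound being ``uniform in the truncation'' does not by itself imply that the series $\sum_{k,j}\gamma_{kj}T_kI_fe_j^k$ converges in $L^2(\Omega;Y)$ (boundedness of Gaussian partial sums without convergence occurs already in $c_0$). What does close the argument is that the Cauchy condition is \emph{inherited}: for any finite set $F$ of indices, $\Exp\|\sum_{(k,j)\in F}\gamma_{kj}T_kI_fe_j^k\|_Y^2\le\gbound(g)^2\,\Exp\|\sum_{(k,j)\in F}\gamma_{kj}I_fe_j^k\|_X^2$, and the latter tails vanish because $f\in\gamma(H,X)$. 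You should state this explicitly rather than invoke uniform boundedness.

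The limiting step has two genuine gaps as written. First, the existence of simple approximants $g_n\to g$ pointwise a.e.\ in the strong operator topology is not automatic from the stated measurability when $X$ is nonseparable; one must first replace $X$ by the separable closed subspace $\overline{I_f(H)}$ (this only decreases $\gbound(g)$, since the set of restricted operators is a subset), after which the strong operator topology is metrizable on bounded subsets of $\bddlin(X,Y)$ and the approximation goes through. Second, and more substantively, the Fatou-type lower semicontinuity of the $\gamma$-norm under weak operator convergence only places the limit in the space $\gamma_\infty(H,Y)$, the completion with respect to boundedness of Gaussian partial sums, which in general strictly contains $\gamma(H,Y)$ (again $Y=c_0$ is the standard counterexample). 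To obtain the stated conclusion $g\cdot f\in\gamma(H,Y)$ one needs the additional fact that $\gamma_\infty(H,Y)=\gamma(H,Y)$ when $Y$ contains no copy of $c_0$; this holds for all UMD spaces and hence in every application in this paper, but it is an ingredient your proposal omits and cannot be waved away as routine.
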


We often exploit this when \(g\) is a bounded scalar-valued function, which we identify with an \(\bddlin(X)\)-valued one via \(\lambda\leftrightarrow\lambda I\). Then it is well-known (and can be proved by a convexity argument) that \(\gbound(g)\leq\Norm{g}{\infty}\). In particular, when $v$ is a unimodular complex function, it follows that $\Norm{v\cdot g}{\gamma(H,X)}=\Norm{g}{\gamma(H,X)}$.

The following duality will also be needed:

\begin{proposition}[\cite{KW}]\label{prop:duality}
If $f\in\gamma(H,X)$ and $g\in\gamma(H,X')$ are strongly measurable functions on $M$, then their pointwise duality product is in $L^1(\mu)$, and satisfies
\begin{equation*}
  \int_M\abs{\pair{f(t)}{g(t)}}\ud\mu(t)\leq\Norm{f}{\gamma(H,X)}\Norm{g}{\gamma(H,X')}.
\end{equation*}
\end{proposition}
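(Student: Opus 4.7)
The plan is to reduce the absolute-value integral to a signed duality pairing via a unimodular multiplier, express this pairing as a ``trace'' over an orthonormal basis of $H$, and bound that trace by a Gaussian duality computation combined with Cauchy--Schwarz.

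First I would choose a measurable unimodular scalar function $v \colon M \to \complex$ so that $v\pair{f}{g} = \abs{\pair{f}{g}}$ pointwise. Then $\abs{\pair{f}{g}} = \pair{vf}{g}$, and Proposition~\ref{prop:multiplier} applied to the unimodular scalar $v$ gives $vf \in \gamma(H,X)$ with $\Norm{vf}{\gamma(H,X)} = \Norm{f}{\gamma(H,X)}$. This reduces the claim to showing $\int_M \pair{f(t)}{g(t)}\ud\mu(t) \leq \Norm{f}{\gamma(H,X)}\Norm{g}{\gamma(H,X')}$. Fixing an orthonormal basis $(e_j)$ of $H$ and setting $x_j := I_f e_j \in X$, $x_j' := I_g e_j \in X'$, the key step is the trace identity
\begin{equation*}
\int_M \pair{f(t)}{g(t)}\ud\mu(t) = \sum_j \pair{x_j}{x_j'}.
\end{equation*}
Granted this, take independent complex standard Gaussians $(\gamma_j)$ with $\Exp\gamma_j\overline{\gamma_k} = \delta_{jk}$ and write
\begin{equation*}
\sum_j\pair{x_j}{x_j'} = \Exp\Bpair{\sum_j \gamma_j x_j}{\sum_k\overline{\gamma_k}x_k'},
\end{equation*}
then apply Cauchy--Schwarz in $L^2(\Omega)$: the first factor produces $\Norm{f}{\gamma(H,X)}$ and the second, by the rotational invariance of the complex Gaussians, equals $\Norm{g}{\gamma(H,X')}$.

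The main obstacle is justifying the trace identity rigorously for general $\gamma$-radonifying operators. My strategy is to first verify it for the finite-rank truncations $f_N, g_N$ associated with $I_f P_N, I_g P_N$, where $P_N$ denotes the orthogonal projection of $H$ onto $\operatorname{span}(e_1,\ldots,e_N)$; there the identity reduces to a direct Pettis-integral computation. The Gaussian--Cauchy--Schwarz bound above, applied to the partial sums after absorbing the phases of $\pair{x_j}{x_j'}$ into a unimodular scalar multiplier via Proposition~\ref{prop:multiplier}, yields $\sum_{j\leq N}\abs{\pair{x_j}{x_j'}} \leq \Norm{f}{\gamma(H,X)}\Norm{g}{\gamma(H,X')}$, so the series converges absolutely with the desired bound. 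A dominated-convergence argument on the integral side then identifies the two limits, showing simultaneously that $\pair{f(\cdot)}{g(\cdot)}$ is in $L^1(\mu)$ and completes the proof. A minor bookkeeping subtlety concerning complex Hilbert-space conventions (whether basis expansions involve $e_j$ or $\bar e_j$) arises but is routine.
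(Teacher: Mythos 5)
Your overall strategy --- reduce to a signed pairing with a unimodular multiplier, expand over an orthonormal basis of $H$, and close with the Gaussian Cauchy--Schwarz bound $\sum_j\abs{\pair{x_j}{x_j'}}\le\Norm{f}{\gamma(H,X)}\Norm{g}{\gamma(H,X')}$ --- is exactly the paper's, and that trace-duality estimate is sound. The gap sits in the step you yourself flag as the main obstacle: passing from the finite-rank identity to $\int_M\pair{f}{g}\ud\mu=\sum_j\pair{x_j}{x_j'}$. For an \emph{arbitrary} orthonormal basis $(e_j)$ of $L^2(\mu)$, the partial sums $f_N$ of the orthonormal expansion of $f$ need not converge to $f(t)$ almost everywhere (a.e.\ convergence of orthonormal expansions of $L^2$ functions fails for general complete orthonormal systems, by Menshov-type examples), and there is no candidate integrable majorant for $\pair{f_N(t)}{g_N(t)}$. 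So ``a dominated-convergence argument on the integral side'' is not available as stated; and since this is also the step that is supposed to deliver the $L^1$-membership of $\pair{f(\cdot)}{g(\cdot)}$, the argument does not close.

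The paper's device is precisely to avoid this by choosing the basis adaptively rather than generically: restrict to a finite-measure set $E$ on which $f$ and $g$ are bounded, take an increasing sequence of finite partitions of $E$ generating $E\cap\mathscr{M}$, and use the associated normalized martingale differences as the orthonormal system. Then $\sum_{n\le N}e_n(e_n,1_Ef)=\Exp_N(1_Ef)\to 1_Ef$ in $L^1(E;X)$ by martingale convergence, so pairing against the bounded function $1_Eg$ makes the interchange of sum and integral elementary (and the real-valuedness of these $e_n$ disposes of your conjugation bookkeeping). The resulting bound $\int_E\abs{\pair{f}{g}}\ud\mu\le\Norm{f}{\gamma(H,X)}\Norm{g}{\gamma(H,X')}$ is uniform in $E$, and monotone convergence over an exhaustion by such sets gives both the integrability and the inequality on all of $M$. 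If you insist on a generic basis, you would instead need to invoke the basis-independence of the trace of the (nuclear) operator $I_g^*I_f$ and then still identify that trace with the integral --- which in practice again means computing in an adapted basis.
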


\subsection{Test functions}\label{ss:testf}

For convenience, we choose to work with test functions \(\psi,\phi\) in the Schwartz class \(\mathscr{S}(\reals^n)\), although an interested reader will easily check that much less would often suffice. We call a test function \(\psi\) \emph{degenerate} if its Fourier transform \(\hat{\psi}\) vanishes identically on some ray \(\{t\xi:t\in[0,\infty)\}\), where \(\xi\in\reals^n\setminus\{0\}\); otherwise it is called \emph{non-degenerate}. For a non-degenerate \(\psi\), one can always find a \emph{complementary function} \(\phi\in\mathscr{S}(\reals^n)\) such that
\begin{equation}\label{eq:complementary}
  \int_0^{\infty}\hat{\psi}(t\xi)\hat{\phi}(-t\xi)\frac{\ud t}{t}=1
\end{equation}
for all \(\xi\in\reals^n\setminus\{0\}\), and \(\hat{\phi}\) may be chosen to vanish in a neighbourhood of the origin, so that in particular \(\phi\) has vanishing integral. In fact, a possible \(\phi\) is given by
\begin{equation*}
  \hat{\phi}(-\xi):=\frac{\chi(\xi)\overline{\hat{\psi}(\xi)}}{\int_0^{\infty}\chi(t\xi)\abs{\hat{\psi}(t\xi)}^2\ud t/t}
\end{equation*}
where we have first chosen a non-negative \(\chi\in C_c^{\infty}(\reals^n\setminus\{0\})\) which vanishes in a sufficiently small neighbourhood of \(0\) so that \(\chi\cdot\hat{\psi}\) is still non-identically zero on all rays from the origin.

\section{The functionals \(A\) and \(C\) of Coifman, Meyer and Stein}\label{sec:AC}

Let \(F:\reals^{n+1}_+\to X\) be strongly measurable, i.e., a pointwise limit of simple \(X\)-valued functions. (For operator-valued functions \(G:\reals^{n+1}_+\to\bddlin(X,Y)\), which we also encounter later on, strong measurability means the strong measurability of \(G(\cdot)\xi:\reals^{n+1}\to Y\) for all \(\xi\in X\).) We now introduce vector-valued versions of the various functionals used by Coifman, Meyer and Stein~\cite{CMS} to define the norms of their ``tent spaces''. To simplify notation, we abbreviate
\begin{equation*}
  \gamma(X):=\gamma\Big(L^2\big(\reals^{n+1}_+,\frac{\ud y\ud t}{t^{n+1}}\big);X\Big).
\end{equation*}

According to the philosophy that the Gauss norms should replace the classical quadratic expressions in the Banach space -valued analysis,
our conical square function and its truncated version are defined by
\begin{equation*}
  A^{(\alpha)}(F)(x):=\Norm{F\cdot 1_{\cone_{\alpha}(x)}}{\gamma(X)},\qquad
  A^{(\alpha)}(F|h)(x):=\Norm{F\cdot 1_{\cone_{\alpha}^h(x)}}{\gamma(X)}.
\end{equation*}
Recall that we interpret the norms as being \(\infty\) if the quantities inside them do not belong to the indicated space.
We abbreviate \(A(F):=A^{(1)}(F)\), \(A(F|h):=A^{(1)}(F|h)\) for our most common choice of the aperture.

For \(X=\complex\), our \(A\) reduces to the functional \(A=A_2\) defined in \cite{CMS}, Section~1. While Coifman et al. consider a family of related functionals \(A_q\) with parameter \(q\in[1,\infty]\) (which may be naturally extended to \(q\in(0,\infty]\) as in \cite{CV}), we only define the vector-valued \(A\) in the quadratic case \(q=2\) as above, and in the end-point case \(q=\infty\). However, the natural setting for the latter one is somewhat different, and we also choose a different notation:

For an operator-valued function \(G:\reals^{n+1}_+\to\bddlin(X,Y)\) (with no additional conditions), we define the non-tangential maximal function
\begin{equation*}
  N^{(\alpha)}(G)(x):=\gbound(\{G(y,t):(y,t)\in\cone_{\alpha}(x)\}).
\end{equation*}
We also write \(N(G):=N^{(1)}(G)\).

\begin{lemma}
For any \(G\), the function \(N^{(\alpha)}(G):\reals^n\to[0,\infty]\) is lower semicontinuous.
\end{lemma}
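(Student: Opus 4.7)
The plan is to show that for every $\lambda\geq 0$, the superlevel set $\{x\in\reals^n: N^{(\alpha)}(G)(x)>\lambda\}$ is open, which is the standard characterization of lower semicontinuity.

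The key structural observation I would use is that $\gbound(\mathscr{T})$ is, by its very definition, a supremum over \emph{finite} sub-collections of $\mathscr{T}$: if we denote by $\gbound_{\rm fin}(\mathscr{T})$ the supremum of $\gbound(\mathscr{T}')$ over all finite $\mathscr{T}'\subset\mathscr{T}$, then $\gbound(\mathscr{T})=\gbound_{\rm fin}(\mathscr{T})$, because the defining inequality only ever involves finitely many $T_j$ at a time. In particular, $\gbound$ is monotone under inclusion of collections.

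Armed with this, fix $x_0$ with $N^{(\alpha)}(G)(x_0)>\lambda$. Then I can pick finitely many points $(y_1,t_1),\ldots,(y_N,t_N)\in\cone_{\alpha}(x_0)$, vectors $\xi_1,\ldots,\xi_N\in X$, and an auxiliary constant $\lambda'\in(\lambda,N^{(\alpha)}(G)(x_0))$ such that
\begin{equation*}
  \Exp\BNorm{\sum_{j=1}^N\gamma_j G(y_j,t_j)\xi_j}{Y}^2
  >(\lambda')^2\,\Exp\BNorm{\sum_{j=1}^N\gamma_j\xi_j}{X}^2.
\end{equation*}
Because each inclusion $(y_j,t_j)\in\cone_{\alpha}(x_0)$ is strict, i.e.\ $\abs{y_j-x_0}<\alpha t_j$, the quantity $\delta:=\min_{1\leq j\leq N}(\alpha t_j-\abs{y_j-x_0})$ is strictly positive. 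For any $x\in B(x_0,\delta)$ one then has $\abs{y_j-x}\leq\abs{y_j-x_0}+\abs{x-x_0}<\alpha t_j$, i.e.\ $(y_j,t_j)\in\cone_{\alpha}(x)$ for every $j$. By monotonicity of $\gbound$, the same test family then witnesses $N^{(\alpha)}(G)(x)\geq\lambda'>\lambda$ for all $x\in B(x_0,\delta)$, proving that the superlevel set is open.

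There is no genuine obstacle here: the entire argument reduces to the openness of the cones $\cone_{\alpha}(x)$ as $x$ varies and the fact that $\gbound$ is determined by its values on finite sub-collections. The only point requiring mild care is choosing the intermediate constant $\lambda'$ so as to convert the definition of $\gbound$ (a supremum that may not be attained) into an inequality realized by a concrete finite family, after which the open-cone argument handles the perturbation in $x$.
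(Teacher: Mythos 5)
Your proof is correct and follows essentially the same route as the paper: both show that superlevel sets are open by using the finitary nature of $\gbound$ to extract a finite family of points in $\cone_{\alpha}(x_0)$ witnessing $N^{(\alpha)}(G)(x_0)>\lambda$, and then observe that these points lie in $\cone_{\alpha}(x)$ for all $x$ in a small ball around $x_0$. The only (harmless) stylistic difference is your explicit introduction of the intermediate level $\lambda'$ and the test vectors $\xi_j$, whereas the paper simply asserts directly that some finite subcollection already has Gauss-bound exceeding $\lambda$.
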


\begin{proof}
Suppose that \(N^{(\alpha)}(G)(x)>\lambda\). Because of the finitary nature of Gauss-boundedness, this means that there exist \(N\in\ints_+\) and \((y_1,t_1),\ldots(y_N,t_N)\in\cone_{\alpha}(x)\) such that
\begin{equation*}
  \gbound(\{G(y_1,t_1),\ldots,G(y_N,t_N)\})>\lambda.
\end{equation*}
But these points also belong to \(\cone_{\alpha}(x')\) whenever \(\abs{x'-x}<\min_{i=1,\ldots,N}[\alpha t_i-\abs{y_i-x_i}]\), and hence \(N^{(\alpha)}(G)(x')>\lambda\) for all \(x'\) in this neighbourhood of \(x\). Thus \(\{N^{(\alpha)}(G)>\lambda\}\) is open.
\end{proof}

We then come to the \(C\) functional, which is relevant for the description of the \(\BMO\) space. It is defined as an average of the truncated \(A\) functionals, with an additional parameter \(q\in(0,\infty)\) indicating the type of the average:
\begin{equation*}
  C_q^{(\alpha)}(F)(x) :=\sup_{B\owns x}\Big(\frac{1}{\abs{B}}\int_B A^{(\alpha)}(F|r(B))^q(y)\ud y\Big)^{1/q}.
\end{equation*}
Note that by Jensen's (or H\"older's) inequality, \(C^{(\alpha)}_q (F)(x)\) is a nondecreasing function of \(q\in(0,\infty)\). Once again, we abbreviate \(C_q(F):=C_q^{(1)}(F)\).

If \(X=\complex\), then
\begin{equation*}\begin{split}
 C_2(F)^2(x)
 &=\sup_{B\owns x}\frac{1}{\abs{B}}\int_B\iint_{\cone^{r(B)}(y)}\abs{F(z,t)}^2\frac{\ud z\ud t}{t^{n+1}}\ud y \\
 &=\sup_{B\owns x}\frac{1}{\abs{B}}\int_0^{r(B)}\frac{\ud t}{t}
  \int_{2B}\ud z\abs{F(z,t)}^2\frac{\abs{B\cap B(z,t)}}{t^n} \\
 &\eqsim\sup_{B\owns x}\frac{1}{\abs{B}}\iint_{B\times(0,r(B))}\abs{F(y,t)}^2\frac{\ud y\ud t}{t}
\end{split}\end{equation*}
reduces to the definition of \(C=C_2\) in \cite{CMS} using the Carleson cylinders \(B\times(0,r(B))\). We note that in \cite{CMS} a scale of functionals \(C_q\) with \(q\in[1,\infty)\) (and naturally extended to \(q\in(0,\infty)\), cf.~\cite{CV}) is also defined, but it differs from our scale, with equivalence of the functionals only at \(q=2\).

\section{Basic estimates for the \(A\) and \(C\) functionals}\label{sec:ACest}

The \(A\) functionals have already been studied in the vector-valued context in \cite{HNP}, and we quote two inequalities from there. Both results involve the UMD property of the underlying Banach space \(X\), and our use of UMD in the present paper will be mainly via the application of these estimates.

\begin{theorem}[\cite{HNP}, Theorem~4.3]\label{thm:aperture}
Let \(X\) be a UMD space, and \(p\in(1,\infty)\) and \(\alpha\in(0,\infty)\).
For all strongly  measurable \(F:\reals^{n+1}_+\to X\), there holds
\begin{equation*}
  \Norm{A^{(\alpha)}(F)}{L^p(\reals^n)}
  \eqsim\Norm{A(F)}{L^p(\reals^n)}.
\end{equation*}
\end{theorem}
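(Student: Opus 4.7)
The $\lesssim$ direction for $\alpha\leq 1$ is immediate from the pointwise inequality $1_{\cone_\alpha(x)}\leq 1_{\cone(x)}$ combined with the $\gamma$-multiplier Proposition~\ref{prop:multiplier} applied to the scalar indicator (so that passing to a smaller cone is a contraction on the $\gamma$-norm). Hence the real content is bounding $A^{(\alpha)}(F)$ by $A(F)$ in $L^p$ when $\alpha>1$.

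My plan is to imitate the classical Coifman--Meyer--Stein change-of-aperture, with the scalar $L^2$ Fubini argument replaced by a $\gamma$-Fubini identification, and the scalar good-$\lambda$ argument replaced by one that uses the $\gbound$-preserving nature of scalar multipliers. First, in $L^2$ the comparison is essentially free: using the identification $\gamma(L^2(\reals^n;L^2(\reals^{n+1}_+,\ud y\ud t/t^{n+1}));X)\cong L^2(\reals^n;\gamma(X))$ one computes, for each fixed $F$,
\begin{equation*}
  \int_{\reals^n}A^{(\alpha)}(F)(x)^2\ud x
  =\Norm{(x,y,t)\mapsto F(y,t)1_{|y-x|<\alpha t}}{\gamma(L^2(\reals^{n+1}\times\reals^n_+);X)}^2
  \eqsim\alpha^n\Norm{F}{\gamma(X)}^2,
\end{equation*}
with implicit constants independent of $\alpha$, so that $\Norm{A^{(\alpha)}(F)}{L^2}\eqsim\alpha^{n/2}\Norm{A(F)}{L^2}$.

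For general $p\in(1,\infty)$ I would prove a good-$\lambda$ inequality of the form
\begin{equation*}
  \abs{\{A^{(\alpha)}(F)>\beta\lambda,\ M(1_{\{A(F)>\lambda\}})<\eta\}}
  \leq C\eta\cdot\abs{\{A^{(\alpha)}(F)>\lambda\}},
\end{equation*}
for a fixed $\beta=\beta(\alpha,n)$ and all sufficiently small $\eta$, where $M$ is the Hardy--Littlewood maximal operator. From this the $L^p$ bound follows by the standard distributional integration. To establish it, decompose $\{A(F)\leq\lambda\}$ via Whitney cubes and, for each Whitney cube $Q$, split $\cone_\alpha(x)=\bigl(\cone_\alpha(x)\cap\bigcup_{z\in cQ}\cone(z)\bigr)\cup R_Q(x)$. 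On the first piece one can bound the $\gamma$-norm by an average of $A(F)(z)$ over $z\in cQ$ using the identity $1_{\cone(z)}(y,t)\ud z/|B(y,t)|$ as a probability measure and the ideal property of $\gamma$; on the remainder $R_Q(x)$ one uses the fact that the relevant $(y,t)$ lie at scale $t\gtrsim\ell(Q)$, which combined with $\gamma$-Fubini at scale $t$ localizes the estimate to a single Whitney cube. The multiplier Proposition~\ref{prop:multiplier} (with scalar indicator multipliers having $\gbound$-bound~$1$) is the device that allows these pointwise scalar splittings to be promoted to $\gamma$-norm estimates.

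The principal obstacle is the last step: scalar good-$\lambda$ arguments rely on decomposing a square integral over the cone into disjoint pieces, and the $\gamma$-norm does not enjoy such additivity. Overcoming it requires either (i) repeated use of Proposition~\ref{prop:multiplier} with $\{0,1\}$-valued multipliers to control the norm of the $\gamma$-valued function restricted to subsets, exploiting that $\gbound(\{1_E\})\leq 1$, together with the triangle inequality in $\gamma$, or (ii) passing to the $\gamma$-Fubini formulation in the $t$-variable and applying a Carleson-type argument in $(y,t)$. Once that splitting is in place the remaining estimates are routine; the UMD hypothesis enters only implicitly through the validity of Proposition~\ref{prop:multiplier} and the boundedness of the non-randomized pieces.
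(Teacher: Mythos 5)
This theorem is cited verbatim from [HNP, Theorem~4.3] and is not proved in the present paper, so there is no in-paper argument to compare against; I can only assess your sketch on its own terms and against the argument in the cited reference. The easy direction (a smaller aperture is dominated pointwise, since $1_{\Gamma_\alpha(x)}\le 1_{\Gamma_\beta(x)}$ for $\alpha\le\beta$ and scalar $\{0,1\}$-valued multipliers are $\gamma$-contractions) and the $L^2$ identity via $\gamma$-Fubini are correct, and they are the natural first two moves. Your computation $\Norm{A^{(\alpha)}(F)}{L^2}^2\eqsim\alpha^n\Norm{F}{\gamma(X)}^2$, with constants independent of $\alpha$, follows from the Hilbert-space isometry $h\mapsto(\text{vol }B(0,\alpha))^{-1/2}\,h(y,t)1_{\{|y-x|<\alpha t\}}$ into $L^2(\ud x\otimes\tfrac{\ud y\,\ud t}{t^{n+1}})$, and this needs no geometric assumption on $X$.

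The passage to general $p$ is where the proposal stops being a proof. You flag the obstruction yourself — the $\gamma$-norm is not additive over disjoint sets — and this is exactly the crux, but neither of your proposed remedies closes it. Your averaging step would need an inequality of the type $\Norm{F\cdot 1_{\Gamma_\alpha(x)}}{\gamma(X)}\lesssim\fint_B\Norm{F\cdot 1_{\Gamma(z)}}{\gamma(X)}\ud z$, obtained by reading $1_{\Gamma(z)}(y,t)\,\ud z/|B(y,t)|$ as a probability density. In the scalar case this works because the $L^2$ norm is an integral, linear in $|F|^2$, so the pointwise identity $\fint_{B(y,t)}1_{\Gamma(z)}(y,t)\ud z=1$ passes directly inside. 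The $\gamma$-norm is not an integral, and the corresponding reverse-Jensen inequality for $\gamma$-norms is false in general. Option~(i) — triangle inequality together with the contractivity of $\{0,1\}$-multipliers — only yields $\Norm{F\,1_{E_1\cup E_2}}{\gamma}\le\Norm{F\,1_{E_1}}{\gamma}+\Norm{F\,1_{E_2}}{\gamma}$, which loses a factor proportional to the number of pieces; that loss is precisely what the scalar argument is engineered to avoid. Option~(ii) is too vague to evaluate, and a $\gamma$-Fubini in $t$ alone does not linearize the cone, since $y$ and $t$ remain coupled through the aperture.

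There is also a tell-tale inconsistency you should notice: as written, your argument uses only Proposition~\ref{prop:multiplier}, $\gamma$-Fubini, and the Hardy--Littlewood maximal theorem, none of which requires UMD — and you say as much in your last sentence. Yet the theorem assumes $X$ is UMD. Either you have genuinely weakened the hypothesis (which should be made explicit and verified carefully), or, far more likely, the geometric assumption on $X$ enters exactly at the step you left open: making the averaging/good-$\lambda$ comparison of $\gamma$-norms over overlapping cones rigorous. I would look at [HNP, \S4] to see how that step is actually carried out; your reduction to $L^2$ and your identification of the crux are both sound, but the crux is not resolved by what you have written.
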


\begin{theorem}[\cite{HNP}, Theorem~4.8]\label{thm:LpTp}
Let \(X\) be a UMD space, and \(p\in(1,\infty)\). Let \(\psi\in\mathscr{S}(\reals^n)\) have a vanishing integral and \(F(x,t):=f*\psi_t(x)\) for \(f:\reals^n\to X\). Then
\begin{equation*}
  \Norm{A(F)}{L^p(\reals^n)}\lesssim \Norm{f}{L^p(\reals^n;X)}.
\end{equation*}
\end{theorem}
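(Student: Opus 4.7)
The plan is to reduce Theorem~\ref{thm:LpTp} to the vector-valued vertical Littlewood--Paley inequality and then bridge to the conical formulation by a Fefferman--Stein-type averaging. Since Theorem~\ref{thm:aperture} allows us to change the aperture without penalty in $L^p(\reals^n)$, I would work with any convenient $\alpha\in(0,\infty)$ from the outset.

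The essential UMD input is the continuous vertical square function estimate
\[
\bigl\|t\mapsto \psi_t*f\bigr\|_{L^p(\reals^n;\,\gamma(L^2((0,\infty),\,\ud t/t);\,X))}
\lesssim \Norm{f}{L^p(\reals^n;X)},
\]
which goes back to Bourgain and McConnell in the discrete form and to Hyt\"onen and to Kaiser--Weis in the continuous form cited in the introduction. This is where the UMD hypothesis enters decisively.

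To pass from the vertical to the conical $\gamma$-norm, I would use the following averaging heuristic. In the scalar case, Fubini gives
\[
A^{(\alpha)}(F)(x)^2 \eqsim \int_0^\infty \Bigl(\frac{1}{|B(x,\alpha t)|}\int_{|y-x|<\alpha t}\abs{F(y,t)}^2 \ud y\Bigr)\,\frac{\ud t}{t}
\leq \int_0^\infty M\bigl(\abs{F(\cdot,t)}^2\bigr)(x)\,\frac{\ud t}{t},
\]
where $M$ is the Hardy--Littlewood maximal operator, so the Fefferman--Stein inequality on $L^{p/2}(L^1(\ud t/t))$ reduces the conical bound to the vertical one. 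In the UMD-valued setting, Proposition~\ref{prop:multiplier} (together with the fact that scalar multipliers have Gauss-bound at most their $L^\infty$-norm) shows that the scalar cutoff $1_{\cone^{(\alpha)}(x)}$ acts as a norm-$1$ multiplier on $\gamma$-spaces, so the same scheme transfers conceptually.

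The main obstacle is making this transfer rigorous, since for a general Banach space the $\gamma$-norm is not a genuine $L^2$-integral, blocking the direct Fubini calculation that is transparent in the scalar case. I would overcome this either by discretizing $t$ into dyadic annuli and combining Khintchine's inequality with the vertical Littlewood--Paley estimate, or by viewing $f\mapsto\bigl(x\mapsto F\cdot 1_{\cone^{(\alpha)}(x)}\bigr)$ as a $\gamma(L^2(\reals^{n+1}_+);X)$-valued singular integral and invoking operator-valued Calder\'on--Zygmund theory in UMD spaces; kernel smoothness comes from $\psi\in\mathscr{S}(\reals^n)$, and the $\gbound$-boundedness of the scalar-kernel family is automatic via $\gbound(g)\leq\Norm{g}{\infty}$.
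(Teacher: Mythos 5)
Two preliminary remarks. First, the paper contains no proof of Theorem~\ref{thm:LpTp}: it is imported verbatim from \cite{HNP}, Theorem~4.8, so the only in-text point of comparison is Corollary~\ref{cor:H1T1}, where the authors push the theorem to the $H^1$--$L^1$ endpoint by precisely your second route: the map $f\mapsto\bigl(x\mapsto F\cdot 1_{\cone(x)}\bigr)$ is exhibited as an integral operator with kernel $K(x,z)\in\bddlin(X,\gamma(X))$ of rank-one type $\xi\mapsto h_{x,z}\otimes\xi$, and the Calder\'on--Zygmund kernel estimates are verified from the Schwartz decay of $\psi$ alone. This is also essentially how \cite{HNP} proves the theorem. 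So of your two alternatives, the operator-valued Calder\'on--Zygmund one is the right skeleton. (A small correction there: the extrapolation of \cite{RdFRT} needs norm bounds on $K(x,z)$ and $\nabla_zK(x,z)$ in $\bddlin(X,\gamma(X))$ --- for rank-one kernels these are just $L^2(\ud y\ud t/t^{n+1})$-norms, as computed in Corollary~\ref{cor:H1T1} --- not any $\gbound$-bound of a scalar family, so your last clause is not the relevant hypothesis.)

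The genuine gap is that Calder\'on--Zygmund extrapolation requires boundedness at one exponent, and your plan never supplies it; the vertical square-function estimate you quote does not, by itself, yield the conical bound at any single $p$. The missing bridge is the case $p=2$: by the $\gamma$-Fubini theorem of van Neerven and Weis one identifies $L^2(\reals^n;\gamma(X))$ with a $\gamma$-space over $\reals^{n+1}_+\times\reals^n$, and the identity $\int_{\reals^n}1_{\cone(x)}(y,t)\ud x=c_nt^n$ then lets one integrate out the cone indicator, giving $\Norm{A(F)}{L^2(\reals^n)}\eqsim\Norm{F}{\gamma(L^2(\reals^{n+1}_+,\,\ud y\ud t/t);X)}$; only after this reduction does the vertical Littlewood--Paley inequality (the UMD input) finish the $p=2$ case. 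Without this step the argument does not start. Your first route should be discarded rather than kept in reserve: the scalar identity $A^{(\alpha)}(F)(x)^2\eqsim\int_0^{\infty}\bigl(\abs{B(x,\alpha t)}^{-1}\int_{B(x,\alpha t)}\abs{F(y,t)}^2\ud y\bigr)\,\ud t/t$ rests on the square function being an iterated integral, which is exactly what the $\gamma$-norm is not; conical and vertical square functions fail to be pointwise (or even $L^p$-) comparable for general UMD spaces without extra type/cotype hypotheses, and neither dyadic discretization nor Khintchine's inequality restores the Fubini step. Even for $X=\complex$ and $p<2$ that averaging argument needs repair, since the Fefferman--Stein maximal inequality is unavailable on $L^{p/2}(L^1(\ud t/t))$.
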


Next we give an end-point extension of the previous theorem:

\begin{corollary}\label{cor:H1T1}
Under the assumptions of Theorem~\ref{thm:LpTp}, there also holds
\begin{equation*}
  \Norm{A(F)}{L^1(\reals^n)}\lesssim \Norm{f}{H^1(\reals^n;X)},
\end{equation*}
where \(H^1\) is the real-variable Hardy space.
\end{corollary}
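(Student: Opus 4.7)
The plan is the standard atomic endpoint argument, with one twist in passing from the $\gamma(X)$-norm to a scalar $L^2$-norm.  Using the atomic decomposition of $H^1(\reals^n;X)$, write $f=\sum_k \lambda_k a_k$ with $\sum_k\abs{\lambda_k}\lesssim\Norm{f}{H^1(\reals^n;X)}$, each $a_k$ being an $X$-valued atom on a ball $B_k$: $\supp a_k\subset B_k$, $\Norm{a_k}{L^\infty(\reals^n;X)}\leq\abs{B_k}^{-1}$, and $\int a_k=0$.  The $\gamma$-norm obeys the triangle inequality, so $g\mapsto A(\psi_t*g)$ is sublinear and it suffices to prove the uniform atomic bound $\Norm{A(\psi_t*a)}{L^1(\reals^n)}\lesssim 1$.

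Fix such an atom $a$ on $B$ and split $\reals^n=4B\cup(4B)^c$.  The local part is immediate from the Cauchy--Schwarz inequality and Theorem~\ref{thm:LpTp} at $p=2$, using $\Norm{a}{L^2(\reals^n;X)}\leq\Norm{a}{\infty}\abs{B}^{1/2}\leq\abs{B}^{-1/2}$:
\begin{equation*}
  \int_{4B}A(\psi_t*a)(x)\ud x\lesssim\abs{B}^{1/2}\Norm{A(\psi_t*a)}{L^2(\reals^n)}\lesssim\abs{B}^{1/2}\Norm{a}{L^2(\reals^n;X)}\leq 1.
\end{equation*}

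For the non-local part, I use $\int a=0$ to write, with $\phi_z(y,t):=\psi_t(y-z)-\psi_t(y-c(B))$,
\begin{equation*}
  (\psi_t*a)(y)\cdot 1_{\cone(x)}(y,t)=\int_B a(z)\,\phi_z(y,t)1_{\cone(x)}(y,t)\ud z.
\end{equation*}
The crucial observation is the rank-one identity: for any $v\in X$ and scalar $g\in L^2(\mu)$, a computation with an orthonormal basis of $L^2(\mu)$ starting with $g/\Norm{g}{L^2}$ yields $\Norm{v\otimes g}{\gamma(X)}=\Norm{v}{X}\Norm{g}{L^2(\mu)}$.  Applying this together with Minkowski's inequality for the $\gamma(X)$-valued Bochner integral in $z\in B$ gives
\begin{equation*}
  A(\psi_t*a)(x)\leq\int_B\Norm{a(z)}{X}\,S_z(x)\ud z,\qquad S_z(x):=\Big(\iint_{\cone(x)}\abs{\phi_z(y,t)}^2\frac{\ud y\ud t}{t^{n+1}}\Big)^{1/2}.
\end{equation*}

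The scalar quantity $S_z$ is now amenable to the classical Littlewood--Paley-type estimate applied to the signed measure $\delta_z-\delta_{c(B)}$: using $\abs{z-c(B)}\leq r(B)$, the mean-value theorem, and the Schwartz decay of $\nabla\psi$ (split into regimes $t\leq\abs{x-c(B)}/2$ and $t>\abs{x-c(B)}/2$), a routine computation yields
\begin{equation*}
  S_z(x)\lesssim\frac{r(B)}{\abs{x-c(B)}^{n+1}}\qquad\text{for }x\in(4B)^c,\ z\in B,
\end{equation*}
whence $\int_{(4B)^c}S_z(x)\ud x\lesssim 1$ uniformly in $z$.  Combining with $\int_B\Norm{a(z)}{X}\ud z\leq\Norm{a}{\infty}\abs{B}\leq 1$ gives $\int_{(4B)^c}A(\psi_t*a)\lesssim 1$ and completes the uniform atomic estimate.

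The one non-classical ingredient is the rank-one reduction, which lets the Banach space $X$ cleanly factor out of the non-local analysis; UMD intervenes only through the $L^2$-bound of Theorem~\ref{thm:LpTp} in the local part.  I do not anticipate any serious obstacle beyond carefully verifying that the $\gamma(X)$-valued integrand $z\mapsto a(z)\otimes(\phi_z 1_{\cone(x)})$ is Bochner-integrable on $B$, which follows from the explicit pointwise $L^2$ bound on $\phi_z$.
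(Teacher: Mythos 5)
Your proposal is correct, but it travels a genuinely different road than the paper. The paper's proof rewrites (a pointwise majorant of) the map $f\mapsto A(F)$ as a $\gamma(X)$-valued singular integral operator with kernel
\begin{equation*}
  K(x,z):\ \xi\mapsto\Big[(y,t)\mapsto\tfrac{1}{t^n}\psi(\tfrac{x-z}{t})\eta(\tfrac{y-x}{t})\Big]\otimes\xi,
\end{equation*}
where a smooth cutoff $\eta$ replaces the sharp indicator $1_{\cone(x)}$ so that $K$ is differentiable in $z$; it then verifies the Calder\'on--Zygmund size and gradient bounds and simply invokes the vector-valued Calder\'on--Zygmund theory of Rubio de Francia--Ruiz--Torrea to get $H^1\to L^1$. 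You instead unwind that machinery: you use the atomic decomposition of $H^1(\reals^n;X)$ directly, run the classical near/far dichotomy, control the near part by the $L^2$ case of Theorem~\ref{thm:LpTp}, and control the far part by cancellation plus the pointwise kernel decay. What the two arguments share is the rank-one identity $\Norm{v\otimes g}{\gamma(X)}=\norm{v}{X}\Norm{g}{L^2(\mu)}$ (which in the paper appears as $\Norm{\xi\mapsto h\otimes\xi}{\bddlin(X,\gamma(X))}=\Norm{h}{L^2}$), and this is exactly what lets the Banach space factor out of the scalar kernel estimates in both cases. The paper's route is shorter on the page and automatically yields the weak-$(1,1)$ and full $L^p$ scale from a single kernel verification; yours is more self-contained, avoids the smoothing device $\eta$, and makes explicit where the $\gamma$-structure is used. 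One point worth stating more carefully if you write this up: passing from the uniform atomic bound $\Norm{A(\psi_t*a)}{L^1}\lesssim 1$ to the bound for a general $f=\sum_k\lambda_k a_k$ requires justifying $A(\psi_t*f)\leq\sum_k\abs{\lambda_k}A(\psi_t*a_k)$ a.e.\ (i.e.\ that the sublinear operator $A$ interacts well with the $H^1$-convergent series); this is a standard but nontrivial step in any atomic endpoint argument, whereas the paper sidesteps it by citing the general operator-valued Calder\'on--Zygmund theorem.
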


\begin{proof}
We observe that
\begin{equation}\label{eq:sandwich}\begin{split}
  A(F)(x):=\Norm{F\cdot 1_{\cone(x)}}{\gamma(X)}
  &\leq\Norm{(y,t)\mapsto F(y,t)\eta((y-x)/t)}{\gamma(X)} \\
  &\leq A^{(2)}(F)(x)
\end{split}\end{equation}
where \(\eta\in C_c^{\infty}(\reals^n)\) is bounded by one everywhere, equal to one in \(B(0,1)\), and vanishes outside \(B(0,2)\). It follows from Theorems~\ref{thm:aperture} and \ref{thm:LpTp} that the linear mapping taking \(f\) to \(x\mapsto[(y,t)\mapsto F(y,t)\eta((y-x)/t)]\) is bounded from \(L^p(\reals^n;X)\) to \(L^p(\reals^n,\gamma(X))\) for \(p\in(1,\infty)\). 
This mapping is given by the integral operator
\begin{equation*}
  f(x)\mapsto\int_{\reals^n}K(x,z)f(z)\ud z,
\end{equation*}
where
\begin{equation*}
  K(x,z):X\to\gamma(X),\xi\mapsto\big[(y,t)\mapsto \frac{1}{t^n}\phi(\frac{x-z}{t})\eta(\frac{y-x}{t})\big]\otimes\xi.
\end{equation*}
To obtain its boundedness from \(H^1(\reals^n;X)\) to \(L^1(\reals^n,\gamma(X))\) (which by \eqref{eq:sandwich} completes the proof), it suffices to show (see \cite{RdFRT}) that the \(K\) defined above is a Calder\'on--Zygmund kernel. Observe that
\begin{equation*}
  \Norm{\xi\mapsto h\otimes\xi}{\bddlin(X,\gamma(X))}
  =\Norm{h}{L^2(\tfrac{\ud y\ud t}{t^{n+1}})}.
\end{equation*}
Hence the claim follows from the computations
\begin{equation*}\begin{split}
  &\Norm{K(x,z)}{\bddlin(X,\gamma(X))}
  =\iint_{\reals^{n+1}_+}\Babs{\frac{1}{t^n}\phi(\frac{x-z}{t})
     \eta(\frac{y-x}{t})}^2\frac{\ud y\ud t}{t^{n+1}} \\
  &\leq\int_0^{\infty}\int_{B(x,ct)}\frac{1}{t^{2n}}\Big(1+\frac{\abs{x-z}}{t}\Big)^{-2(n+1)}\ud y\frac{\ud t}{t^{n+1}} 
   \lesssim\frac{1}{\abs{x-z}^{2n}},
\end{split}\end{equation*}
and
\begin{equation*}\begin{split}
  &\Norm{\nabla_z K(x,z)}{\bddlin(X,\gamma(X)^n)}
  \lesssim\iint_{\reals^{n+1}_+}\Babs{\frac{1}{t^{n+1}}\nabla\phi(\frac{x-z}{t})
     \eta(\frac{y-x}{t})}^2\frac{\ud y\ud t}{t^{n+1}} \\
  &\leq\int_0^{\infty}\int_{B(x,ct)}\frac{1}{t^{2(n+1)}}
    \Big(1+\frac{\abs{x-z}}{t}\Big)^{-2(n+2)}\ud y\frac{\ud t}{t^{n+1}}
   \lesssim\frac{1}{\abs{x-z}^{2(n+1)}}.
\end{split}\end{equation*}
In both cases the last step follows easily after observing that the \(y\)-integration only yields a factor \(Ct^n\), and splitting the \(t\)-integration to the two intervals \([0,\abs{x-z})\) and \([\abs{x-z},\infty)\).
\end{proof}

We then present some norm inequalities between \(A(F)\) and \(C_q(F)\). These estimates mostly depends on the definition of \(C_q(F)\) as an average of the truncated versions of \(A(F)\), and have quite little to do with the more precise internal structure of these quantities. Hence the proof of the following theorem is almost a repetition of the original scalar-valued argument of Coifman, Meyer and Stein~\cite{CMS}, and we only write it out for the sake of completeness.

\begin{theorem}[\cite{CMS}, Theorem~3]\label{thm:AC}
Let \(F:\reals^{n+1}_+\to X\) be strongly measurable.
The following relations are valid for all \(\alpha\in(0,\infty)\):
\begin{itemize}
  \item[(a)] If \(X\) is a UMD space, \(p\in(1,\infty)\) and \(q\in(0,\infty)\), then
\begin{equation*}
  \Norm{A^{(\alpha)}(F)}{L^p(\reals^n)}\lesssim \Norm{C_q^{(\alpha)}(F)}{L^p(\reals^n)}.
\end{equation*}
   \item[(b)] If \(X\) is any Banach space and \(0<q<p\leq\infty\), then
\begin{equation*}
  \Norm{C_q^{(\alpha)}(F)}{L^p(\reals^n)}\lesssim \Norm{A^{(\alpha)}(F)}{L^p(\reals^n)}.
\end{equation*}
\end{itemize}
In particular, if \(X\) is a UMD space and \(0<q\leq 1<p<\infty\), then for all \(\alpha,\beta\in(0,\infty)\) there holds
\begin{equation*}
  \Norm{A^{(\alpha)}(F)}{L^p(\reals^n)}\eqsim\Norm{C^{(\beta)}_q(F)}{L^p(\reals^n)}.
\end{equation*}
\end{theorem}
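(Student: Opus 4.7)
For (b), the pointwise inequality $A^{(\alpha)}(F|h)(y) \leq A^{(\alpha)}(F)(y)$ (immediate from $\cone_\alpha^h(x) \subset \cone_\alpha(x)$ and the ideal property of the Gauss norm) yields
\begin{equation*}
  C_q^{(\alpha)}(F)(x)^q \leq \sup_{B\ni x}\frac{1}{\abs{B}}\int_B A^{(\alpha)}(F)(y)^q\,\ud y = M(A^{(\alpha)}(F)^q)(x),
\end{equation*}
where $M$ is the Hardy--Littlewood maximal operator. The $L^{p/q}$-boundedness of $M$ (valid since $p>q$) applied to $A^{(\alpha)}(F)^q$ then gives $\Norm{C_q^{(\alpha)}(F)}{L^p} \lesssim \Norm{A^{(\alpha)}(F)}{L^p}$; the case $p=\infty$ is trivial from the same pointwise bound.

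Part (a) is where the real work lies. By Theorem~\ref{thm:aperture} one is free to bound $\Norm{A^{(\alpha')}(F)}{L^p}$ instead, for some $\alpha' \in (0,\alpha)$ of one's choosing. I plan to establish a good-$\lambda$ inequality
\begin{equation*}
  \abs{\{A^{(\alpha')}(F)>2\lambda,\ C_q^{(\alpha)}(F)\leq\gamma\lambda\}} \leq C\gamma^q\,\abs{\{A^{(\alpha)}(F)>\lambda\}}
\end{equation*}
with $C$ independent of $\gamma$. Integrating this against $p\lambda^{p-1}\,\ud\lambda$ and invoking Theorem~\ref{thm:aperture} to convert $\Norm{A^{(\alpha)}(F)}{L^p}$ back to $\Norm{A^{(\alpha')}(F)}{L^p}$ produces
\begin{equation*}
  \Norm{A^{(\alpha')}(F)}{L^p}^p \leq C'\gamma^q\Norm{A^{(\alpha')}(F)}{L^p}^p + C''\gamma^{-p}\Norm{C_q^{(\alpha)}(F)}{L^p}^p,
\end{equation*}
after which a small enough $\gamma$ permits absorption, provided $\Norm{A^{(\alpha')}(F)}{L^p}$ is known to be finite. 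This a priori finiteness I would arrange by first truncating $F$ to $F\cdot 1_{E_M}$ with $E_M=\{(y,t):\abs{y}<M,\ 1/M<t<M,\ \Norm{F(y,t)}{X}\leq M\}$, proving the bound for the truncation (where $A^{(\alpha')}$ is pointwise bounded with compact support), and then letting $M\to\infty$ by monotone convergence.

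For the good-$\lambda$ inequality itself, after verifying the lower semi-continuity of $A^{(\alpha)}(F)$ (a Fatou argument using $1_{\cone_\alpha(x_k)}\to 1_{\cone_\alpha(x)}$ a.e.\ as $x_k\to x$ and the ideal property of the $\gamma$-norm), the set $G:=\{A^{(\alpha)}(F)>\lambda\}$ is open, and I Whitney-decompose it as $G=\bigsqcup_j Q_j$. On a single cube $Q=Q_j$ (assuming the set in question meets $Q$), pick $x^*\in Q$ with $C_q^{(\alpha)}(F)(x^*)\leq\gamma\lambda$ and $y_0\in G^c$ with $\abs{y_0-x}\lesssim\diam Q$ for all $x\in Q$. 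The geometric core is the choice $R'\asymp\diam Q/(\alpha-\alpha')$, which makes $\cone_{\alpha'}(x)\cap\{t>R'\}\subset\cone_\alpha(y_0)$ for every $x\in Q$; the triangle inequality for the Gauss norm then yields
\begin{equation*}
  A^{(\alpha')}(F)(x) \leq A^{(\alpha')}(F|R')(x)+\Norm{F\cdot 1_{\cone_{\alpha'}(x)\cap\{t>R'\}}}{\gamma(X)} \leq A^{(\alpha')}(F|R')(x)+\lambda,
\end{equation*}
so on $Q$ the condition $A^{(\alpha')}(F)(x)>2\lambda$ forces $A^{(\alpha')}(F|R')(x)>\lambda$. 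Choosing $R\geq R'$ so that $B:=B(x^*,R)\supset Q$, using the pointwise domination $A^{(\alpha')}(F|R')(x)\leq A^{(\alpha)}(F|R)(x)$, and plugging into the definition of $C_q^{(\alpha)}(F)(x^*)$ give
\begin{equation*}
  \int_Q A^{(\alpha')}(F|R')^q \leq \int_B A^{(\alpha)}(F|R)^q \leq (\gamma\lambda)^q\abs{B} \lesssim (\gamma\lambda)^q\abs{Q},
\end{equation*}
and Chebyshev's inequality bounds $\abs{\{x\in Q:A^{(\alpha')}(F|R')(x)>\lambda\}}$ by $C\gamma^q\abs{Q}$. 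Summing over $j$ and noting $\{A^{(\alpha')}(F)>2\lambda\}\subset G$ completes the good-$\lambda$ inequality.

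The final equivalence with arbitrary apertures follows by chaining (a) and (b) with aperture $\beta$ and invoking Theorem~\ref{thm:aperture} once more to swap $\Norm{A^{(\beta)}(F)}{L^p}$ with $\Norm{A^{(\alpha)}(F)}{L^p}$. The main obstacle I anticipate is setting up the truncation/limiting argument so that the absorption step is fully rigorous, since the rest reduces to a vector-valued rerun of the classical Coifman--Meyer--Stein scheme, with the Gauss triangle inequality and Theorem~\ref{thm:aperture} in place of their scalar $L^2$ counterparts.
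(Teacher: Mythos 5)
Your proof follows exactly the paper's good-$\lambda$/Whitney-decomposition strategy, with the only cosmetic difference that you shrink the aperture to $\alpha'<\alpha$ and expand back via Theorem~\ref{thm:aperture}, whereas the paper enlarges to $\alpha+10$ in the good-$\lambda$ inequality and then shrinks back; the two schemes are interchangeable, and part (b) is verbatim the paper's maximal-function observation. Your explicit treatment of lower semicontinuity (needed to justify the Whitney decomposition) and of the a priori finiteness is at least as careful as the paper's, which simply defers to ``a standard approximation argument; cf.~\cite{CMS}''; the one loose end in your truncation sketch is that boundedness of $F\cdot 1_{E_M}$ alone does not force finiteness of the Gauss norm in a general Banach space, so one would also want to project onto a finite-dimensional subspace, which is possible since $F$ is strongly measurable.
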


Note that the last statement of the theorem gives an indication of the usefulness of the functionals \(C_q\) for \(q\in(0,1]\), even in the scalar setting \(X=\complex\). Had we insisted on the use of \(C_2\) only, the admissible range of \(p\) in the last conclusion would be restricted to \(p>2\) as in \cite{CMS}.

\begin{proof}
Part (b) is immediate from the observation that \(C_q^{(\alpha)}(F)\leq M(A^{(\alpha)}(F)^q)^{1/q}\), where \(M\) is the Hardy--Littlewood maximal function, and the maximal inequality in \(L^{r}\), where \(r=p/q\in(1,\infty]\).

Part (a) follows from the distributional inequality
\begin{equation}\label{eq:goodLambda}
  \abs{\{A^{(\alpha)}(F)>2\lambda\}}
  \leq \abs{\{C_q^{(\alpha)}(F)>\gamma\lambda\}}
   + C\gamma^q\abs{\{A^{(\alpha+10)}(F)>\lambda\}},
\end{equation}
which, for all \(\alpha,q\in(0,\infty)\), is true with some \(C\), and for all \(\gamma\in(0,1],\lambda\in(0,\infty)\), as we show in the following lemma.
Integrating \eqref{eq:goodLambda} multiplied by \(p\lambda^{p-1}\), we obtain
\begin{equation*}
  2^{-p}\Norm{A^{(\alpha)}(F)}{L^p}^p
  \leq\gamma^{-p}\Norm{C_q^{(\alpha)}(F)}{L^p}^p+C\gamma^q\Norm{A^{(\alpha+10)}(F)}{L^p}^p.
\end{equation*}
Since \(\Norm{A^{(\alpha+10)}(F)}{L^p}\leq C'\Norm{A^{(\alpha)}(F)}{L^p}\) by Theorem~\ref{thm:aperture}, we obtain
\begin{equation*}
  \Norm{A^{(\alpha)}(F)}{L^p}\lesssim\Norm{C_q^{(\alpha)}(F)}{L^p}
\end{equation*}
provided that \(\gamma>0\) is chocen sufficiently small and the left-hand side is finite. The finiteness assumption can be removed by a standard approximation argument; cf.~\cite{CMS}.
\end{proof}

\begin{lemma}[\cite{CMS}, Lemma~3]
For all \(q,\alpha\in(0,\infty)\), there exists a constant \(C\) such that
\begin{equation*}
  \abs{\{A^{(\alpha)}(F)>2\lambda; C_q^{(\alpha)}(F)\leq\gamma\lambda\}}
  \leq C\gamma^q\abs{\{A^{(\alpha+10)}(F)>\lambda\}}
\end{equation*}
for all \(\gamma\in(0,1]\) and \(\lambda\in(0,\infty)\). In particular, \eqref{eq:goodLambda} holds for the same parameters.
\end{lemma}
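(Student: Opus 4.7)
The plan is to replicate the classical Coifman--Meyer--Stein good-$\lambda$ argument, exploiting the fact that the conical ``$\gamma$-square function'' is subadditive over disjoint height-regions via the triangle inequality in $\gamma(X)$, just as the scalar $L^2$ conical integral is. First I would assume without loss of generality that $E:=\{A^{(\alpha+10)}(F)>\lambda\}$ has finite measure (otherwise the right-hand side is $\infty$) and, by outer regularity of Lebesgue measure, pass to an open set $U\supset E$ with $|U|\leq 2|E|$. A Whitney decomposition $\{Q_j\}$ of $U$ then produces, for each $j$, a reference point $\bar{x}_j\in U^c\subset E^c$ at distance $\leq 4\diam(Q_j)$ from $Q_j$, for which we know $A^{(\alpha+10)}(F)(\bar x_j)\leq\lambda$.

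The geometric device is to attach to each $Q_j$ the ball $B_j:=B(\bar x_j,r_j)$ of radius $r_j:=10\diam(Q_j)$, so that $Q_j\subset B_j$ and $|B_j|\eqsim|Q_j|$. For any $x\in Q_j$ I would split
\begin{equation*}
  F\cdot 1_{\cone_\alpha(x)}=F\cdot 1_{\cone_\alpha^{r_j}(x)}+F\cdot 1_{\cone_\alpha(x)\cap\{t\geq r_j\}}
\end{equation*}
and apply the triangle inequality in $\gamma(X)$. For $(y,t)$ in the high-height piece, the estimate $|y-\bar x_j|\leq|y-x|+|x-\bar x_j|<\alpha t+r_j/2\leq(\alpha+\tfrac12)t$ (using $t\geq r_j$) places it inside $\cone_{\alpha+10}(\bar x_j)$, so the $\gamma(X)$-norm of that piece is bounded by $A^{(\alpha+10)}(F)(\bar x_j)\leq\lambda$. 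Hence $A^{(\alpha)}(F)(x)>2\lambda$ forces $A^{(\alpha)}(F|r_j)(x)>\lambda$.

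If there exists any $x_0\in Q_j$ with $C_q^{(\alpha)}(F)(x_0)\leq\gamma\lambda$ (otherwise the level set does not meet $Q_j$), then $B_j\owns x_0$ and $r(B_j)=r_j$, so $B_j$ is admissible in the supremum defining $C_q^{(\alpha)}(F)(x_0)$, giving
\begin{equation*}
  \frac{1}{|B_j|}\int_{B_j}A^{(\alpha)}(F|r_j)(y)^q\ud y\leq(\gamma\lambda)^q.
\end{equation*}
Chebyshev's inequality then yields $|\{y\in B_j:A^{(\alpha)}(F|r_j)(y)>\lambda\}|\leq\gamma^q|B_j|\lesssim\gamma^q|Q_j|$, and summing over $j$ (using $\sum_j|Q_j|=|U|\leq 2|E|$) delivers the claimed bound.

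The main subtlety is the very first step: because strong measurability of $F$ is not obviously enough to make $x\mapsto A^{(\alpha+10)}(F)(x)$ lower semicontinuous, one cannot Whitney decompose $E$ directly, and must instead work with an open enlargement of negligibly larger measure. Apart from this small piece of bookkeeping, the argument is a faithful transcription of the scalar proof in \cite{CMS}, with $\gamma(X)$-norms replacing $L^2$-norms throughout.
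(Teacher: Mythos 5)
Your proof is essentially the same good-$\lambda$ argument as in the paper: Whitney-decompose (an open version of) $\{A^{(\alpha+10)}(F)>\lambda\}$, pick a reference point in the complement near each cube, split $F$ at a height comparable to the cube size, absorb the high part into $A^{(\alpha+10)}(F)$ at the reference point, and treat the low part via the $C_q$-average over a ball containing the cube and Chebyshev. The geometric bookkeeping is slightly different --- you centre the ball $B_j$ at the reference point $\bar x_j$ with radius $10\diam(Q_j)$, while the paper uses the smallest ball containing the Whitney cube --- but the two are interchangeable and both yield the aperture increase inside the stated margin of $10$.

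The one genuine point of departure is the opening step. You worry that $A^{(\alpha+10)}(F)$ might not be lower semicontinuous and therefore enlarge $E=\{A^{(\alpha+10)}(F)>\lambda\}$ to an open $U$ with $\abs{U}\leq 2\abs{E}$ before applying Whitney. The paper Whitney-decomposes $E$ directly, implicitly taking it to be open. In the scalar case this is automatic (Fatou's lemma); in the vector-valued case it follows from the Fatou property of the $\gamma$-norm, which holds precisely when $X$ does not contain $c_0$ --- in particular for the UMD spaces to which part~(a) of Theorem~\ref{thm:AC} (where this lemma is used) applies. So your precaution is not needed in the setting at hand, but it is a legitimate way to make the argument work verbatim for an arbitrary Banach space, at no real cost. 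Either way the substance of the proof is identical.
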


\begin{proof}
Let \(\beta:=\alpha+10\).
Denoting by \(\bigcup_{k=1}^{\infty}Q_k\) a Whitney decomposition of the open set \(\{A^{(\beta)}(F)>\lambda\}\), it suffices to prove that
\begin{equation}\label{eq:inQk}
  \abs{\{x\in Q_k: A^{(\alpha)}(F)(x)>2\lambda, C_q^{(\alpha)}(F)(x)\leq\gamma\lambda\}}
  \leq C\gamma^q\abs{Q_k}
\end{equation}
for each \(k\). Thus we fix a particular \(Q_k\) and denote by \(B\) the minimal ball containing it.
By the properties of the Whitney decomposition, for each \(k\) there exists an \(x_k\in\{A^{(\beta)}(F)\leq\lambda\}\) such that \(d(x_k,Q_k)\leq 8r(B)\) and hence \(d(x_k,x)\leq 10r(B)\) for all \(x\in Q_k\).

We decompose
\begin{equation*}
  F(y,t)=F(y,t)1_{[r(B),\infty)}(t)+F(y,t)1_{(0,r(B))}(t)=:F_1(y,t)+F_2(y,t).
\end{equation*}
If \(x\in Q_k\) and \((y,t)\in\cone_{\alpha}(x)\) with \(t\geq r(B)\), then
\begin{equation*}
  \abs{y-x_k}\leq\abs{y-x}+\abs{x-x_k}<\alpha t+10 r(B)\leq (\alpha+10)t=\beta t,
\end{equation*}
and hence \((y,t)\in\cone_{\beta}(x_k)\). Then
\begin{equation*}
  A^{(\alpha)}(F_1)(x)\leq A^{(\beta)}(F)(x_k)\leq\lambda.
\end{equation*}
On the other hand, there holds \(A^{(\alpha)}(F_2)(x)=A^{(\alpha)}(F|r(B))(x)\), so that
\begin{equation*}
  \frac{1}{\abs{B}}\int_B A^{(\alpha)}(F_2)^q(y)\ud y
  \leq c \inf_{x\in B} C_q^{(\alpha)}(F)^q(x)\leq c(\gamma\lambda)^q,
\end{equation*}
where the last estimate is true whenever the set on the left of \eqref{eq:inQk} is non-empty. It follows that
\begin{equation*}
\begin{split}
  \abs{\{x\in Q_k:A^{(\alpha)}(F)(x)>2\lambda\}}
  &\leq\abs{\{x\in G_k:A^{(\alpha)}(F_2)(x)>\lambda\}} \\
  &\leq c(\gamma\lambda)^q\abs{B}\frac{1}{\lambda^q}=C\gamma^q\abs{Q_k},
\end{split}
\end{equation*}
which was to be proven.
\end{proof}

Finally we come to a duality inequality involving the \(A\) and \(C\) functionals. We follow closely the scalar-valued argument from \cite{Stein}, Section~IV.4.4, starting with auxiliary results for the following \emph{stopping time}:
Fix some \(q\in(0,\infty)\) and \(\rho>1\), and define \(\tau(x)\) by
\begin{equation*}
  \tau(x):=\sup\{\tau>0:A(F|\tau)(x)\leq
   \rho C_q(F)(x)\}.
\end{equation*}

\begin{lemma} For all balls \(B\), there holds
\(\abs{B\cap\{\tau>r(B)\}}\geq (1-\rho^{-q})\abs{B}\).
\end{lemma}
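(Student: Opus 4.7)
The plan is to apply Chebyshev's inequality to the single averaged bound that the definition of $C_q(F)$ yields for the ball $B$. Throughout, set $c := \inf_{x \in B} C_q(F)(x)$; the case $c = \infty$ makes $\tau \equiv \infty$ on $B$ and is trivial.

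The first step is to notice that $B$ itself is eligible in the supremum defining $C_q(F)(x)$ for every $x \in B$, so
\begin{equation*}
C_q(F)(x)^q \geq \frac{1}{|B|} \int_B A(F|r(B))(y)^q\,dy \qquad \text{for all } x\in B,
\end{equation*}
and infimizing over $x \in B$ gives
\begin{equation*}
\int_B A(F|r(B))(y)^q\,dy \leq c^q |B|.
\end{equation*}
The second step is Chebyshev at the level $\rho c$, which produces
\begin{equation*}
\big|\{y \in B : A(F|r(B))(y) > \rho c\}\big| \leq (\rho c)^{-q} c^q |B| = \rho^{-q}|B|,
\end{equation*}
so that the complement $E := \{y \in B : A(F|r(B))(y) \leq \rho c\}$ satisfies $|E| \geq (1 - \rho^{-q})|B|$.

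The third step is to translate the bound on $E$ into a bound on $\{\tau > r(B)\}$. For each $y \in E$, combining $A(F|r(B))(y) \leq \rho c$ with the trivial estimate $c \leq C_q(F)(y)$ gives $A(F|r(B))(y) \leq \rho C_q(F)(y)$, which puts $r(B)$ into the set whose sup defines $\tau(y)$ and so produces $\tau(y) \geq r(B)$. To sharpen this to the strict inequality required by the statement, I would rerun the Chebyshev argument with a parameter $\rho' \in (1, \rho)$ so that the inequality has a definite gap, and then invoke the right-continuity of $\tau \mapsto A(F|\tau)(y)$ (which follows from $1_{\cone^{\tau}(y)} \downarrow 1_{\cone^{\tau_0}(y)}$ outside the null set $\{t = \tau_0\}$ together with an ideal property of the Gauss norm recorded in Proposition~\ref{prop:multiplier}) to conclude that $A(F|\tau)(y) \leq \rho C_q(F)(y)$ persists for some $\tau > r(B)$; finally, letting $\rho' \uparrow \rho$ restores the stated bound $|B \cap \{\tau > r(B)\}| \geq (1 - \rho^{-q})|B|$.

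The only real obstacle is this cosmetic strict-versus-non-strict distinction; the substantive content of the lemma is captured entirely by the single Chebyshev application, which efficiently converts the integral bound extracted from the definition of $C_q(F)$ into a pointwise bound on a large subset of $B$.
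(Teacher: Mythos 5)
Your proof is correct and is essentially the paper's own argument: both rest on the single observation that the $B$-average of $A(F|r(B))^q$ is at most $\inf_{x\in B} C_q(F)^q(x)$, followed by a Chebyshev-type estimate at level $\rho\inf_{x\in B}C_q(F)(x)$ (the paper phrases this as integrating the pointwise bound $A(F|r(B))\geq\rho C_q(F)$ over $B\cap\{\tau\leq r(B)\}$ and cancelling, which is the same computation). The strict-versus-non-strict boundary issue you flag at $\tau(y)=r(B)$ is glossed over in the paper's proof as well, so your extra care there is a harmless refinement rather than a departure.
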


\begin{proof}
Observe that
\begin{equation*}
  \frac{1}{\abs{B}}\int_B A(F|r(B))^q(y)\ud y
  \leq\inf_{x\in B}C_q(F)^q(x),
\end{equation*}
so that
\begin{equation*}\begin{split}
  \inf_{x\in B}C_q(F)^q(y)
  &\geq\frac{1}{\abs{B}}\int_{B\cap\{\tau\leq r(B)\}}
    A(F|r(B))^q(y)\ud y \\
  &\geq\frac{1}{\abs{B}}\int_{B\cap\{\tau\leq r(B)\}}
    \rho^p C_q(F)^q(y)\ud y \\
  &\geq \rho^q\frac{\abs{B\cap\{\tau\leq r(B)\}}}{\abs{B}}
    \inf_{x\in B}C_q(F)^q(x).
\end{split}\end{equation*}
The claim follows after cancellation and complementation.
\end{proof}

\begin{corollary}\label{cor:Fubini}
For all positive measurable functions \(H\) on \(\reals^{n+1}_+\), there holds
\begin{equation*}
  \iint_{\reals^n}H(y,t)\frac{\ud y\ud t}{t}
  \lesssim\int_{\reals^n}\iint_{\cone^{\tau(x)}(x)}H(y,t)\frac{\ud y\ud t}{t^{n+1}}\ud x.
\end{equation*}
\end{corollary}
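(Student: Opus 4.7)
The plan is a routine Fubini-Tonelli interchange followed by a geometric volume bound obtained from the preceding lemma. Write $E(y,t) := \{x\in\reals^n : (y,t)\in\cone^{\tau(x)}(x)\}$; unpacking the definition of the truncated cone, this is exactly
\begin{equation*}
  E(y,t) = \{x\in B(y,t) : \tau(x)>t\}.
\end{equation*}

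First I would swap the order of integration on the right-hand side. Since $H\geq 0$ is measurable, Tonelli gives
\begin{equation*}
  \int_{\reals^n}\iint_{\cone^{\tau(x)}(x)}H(y,t)\frac{\ud y\ud t}{t^{n+1}}\ud x
  =\iint_{\reals^{n+1}_+}H(y,t)\,\frac{\abs{E(y,t)}}{t^{n+1}}\,\ud y\ud t.
\end{equation*}
The entire problem thus reduces to producing a pointwise lower bound $\abs{E(y,t)}\gtrsim t^n$ for every $(y,t)\in\reals^{n+1}_+$, since then the displayed integrand dominates a constant times $H(y,t)/t$ and integrating yields the claim.

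The pointwise bound is precisely the content of the preceding lemma applied to the specific ball $B:=B(y,t)$, which has radius $r(B)=t$. That lemma states
\begin{equation*}
  \abs{B(y,t)\cap\{\tau>t\}}\geq(1-\rho^{-q})\abs{B(y,t)},
\end{equation*}
and the left-hand side is by definition $\abs{E(y,t)}$. Since $\abs{B(y,t)}=\omega_n t^n$, this yields $\abs{E(y,t)}\geq(1-\rho^{-q})\omega_n t^n$, with the constant depending only on $n$, $q$ and $\rho$ (all fixed). Plugging back in gives
\begin{equation*}
  \int_{\reals^n}\iint_{\cone^{\tau(x)}(x)}H(y,t)\frac{\ud y\ud t}{t^{n+1}}\ud x
  \geq(1-\rho^{-q})\omega_n\iint_{\reals^{n+1}_+}H(y,t)\frac{\ud y\ud t}{t},
\end{equation*}
which is the stated inequality.

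There is no real obstacle; the only point deserving attention is the correct identification of the set $E(y,t)$ (in particular verifying that the condition $t<\tau(x)$ appearing in the definition of $\cone^{\tau(x)}(x)$ corresponds exactly to the event considered in the previous lemma when $r(B)=t$), after which the lemma supplies the required volume estimate off the shelf.
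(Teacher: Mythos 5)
Your proof is correct and is exactly the argument the paper has in mind: the paper merely says the corollary is ``immediate from the lemma after changing the order of integration on the right,'' and you have filled in the details accurately, including the correct identification $E(y,t)=B(y,t)\cap\{\tau>t\}$ and the application of the lemma to $B=B(y,t)$ to get the lower bound $\abs{E(y,t)}\gtrsim t^n$.
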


\begin{proof}
This is immediate from the lemma after changing the order of integration on the right.
\end{proof}

\begin{theorem}\label{thm:duality}
Let \(F:\reals^{n+1}_+\to X\), \(G:\reals^{n+1}\to X'\) be strongly measurable, and \(q\in(0,\infty)\). Then
\begin{equation*}
  \iint_{\reals^{n+1}_+}\abs{\pair{F(y,t)}{G(y,t)}}\frac{\ud y\ud t}{t}
  \lesssim\int_{\reals^n}C_q(F)(x)A(G)(x)\ud x.
\end{equation*}
\end{theorem}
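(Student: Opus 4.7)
The plan is to combine Corollary \ref{cor:Fubini} (the tent-space style Fubini via the stopping time $\tau$) with the Gauss-norm duality from Proposition \ref{prop:duality}.

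First, I would apply Corollary \ref{cor:Fubini} to the positive function $H(y,t):=|\pair{F(y,t)}{G(y,t)}|$, which is measurable by strong measurability of $F$ and $G$. This reduces the problem to estimating
\begin{equation*}
  \int_{\reals^n}\iint_{\cone^{\tau(x)}(x)}\abs{\pair{F(y,t)}{G(y,t)}}\frac{\ud y\ud t}{t^{n+1}}\ud x,
\end{equation*}
where $\tau$ is the stopping time already built into the preceding lemma and corollary. The virtue of this reduction is that the inner integral is now a pairing over a fixed cone with a uniform measure $\ud y\ud t/t^{n+1}$, which is exactly the measure used to define the space $\gamma(X)=\gamma(L^2(\reals^{n+1}_+,\ud y\ud t/t^{n+1});X)$.

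Next, for each fixed $x$ I would apply Proposition~\ref{prop:duality} to the pair of functions $F\cdot 1_{\cone^{\tau(x)}(x)}$ and $G\cdot 1_{\cone^{\tau(x)}(x)}$ on $\reals^{n+1}_+$, yielding
\begin{equation*}
  \iint_{\cone^{\tau(x)}(x)}\abs{\pair{F}{G}}\frac{\ud y\ud t}{t^{n+1}}
  \leq \Norm{F\cdot 1_{\cone^{\tau(x)}(x)}}{\gamma(X)}\Norm{G\cdot 1_{\cone^{\tau(x)}(x)}}{\gamma(X')}.
\end{equation*}
The first factor is by definition $A(F|\tau(x))(x)$, and by the very definition of $\tau$ (as a supremum of heights at which this truncated functional stays below $\rho C_q(F)(x)$) it is majorised by $\rho C_q(F)(x)$.

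For the second factor, since $\cone^{\tau(x)}(x)\subset\cone(x)$, I would write $G\cdot 1_{\cone^{\tau(x)}(x)} = 1_{\cone^{\tau(x)}(x)}\cdot(G\cdot 1_{\cone(x)})$ and invoke the scalar pointwise-multiplier remark following Proposition~\ref{prop:multiplier}: multiplication by a bounded scalar function of sup-norm at most $1$ does not increase the $\gamma$-norm. Hence
\begin{equation*}
  \Norm{G\cdot 1_{\cone^{\tau(x)}(x)}}{\gamma(X')}
  \leq \Norm{G\cdot 1_{\cone(x)}}{\gamma(X')} = A(G)(x).
\end{equation*}
Putting the three pieces together gives the asserted inequality (with implied constant proportional to $\rho$ times the constant from Corollary~\ref{cor:Fubini}). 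I do not anticipate a real obstacle here; everything has been engineered in the preceding subsections so that the proof is essentially the same as in the scalar case of \cite{Stein}, with Proposition~\ref{prop:duality} replacing Cauchy--Schwarz and the multiplier remark replacing the trivial monotonicity of the $L^2$ norm under restriction.
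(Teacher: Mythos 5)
Your proof is correct and follows exactly the same route as the paper's: Corollary~\ref{cor:Fubini} to localize to cones, Proposition~\ref{prop:duality} in each cone, then the defining property of $\tau$ and monotonicity of the $\gamma$-norm under restriction. The only difference is that you spell out the monotonicity step $A(G|\tau(x))\leq A(G)$ via the scalar multiplier remark after Proposition~\ref{prop:multiplier}, which the paper's proof leaves implicit.
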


\begin{proof}
By Corollary~\ref{cor:Fubini} and Proposition~\ref{prop:duality} (the duality of the Gauss norms),
\begin{equation*}\begin{split}
  LHS
  &\lesssim\int_{\reals^n}\iint_{\cone^{\tau(x)}(x)}\abs{\pair{F(y,t)}{G(y,t)}}
     \frac{\ud y\ud t}{t^{n+1}}\ud x \\
  &\leq\int_{\reals^n}\Norm{F\cdot 1_{\cone^{\tau(x)}(x)}}{\gamma(X)}
      \Norm{G\cdot 1_{\cone^{\tau(x)}(x)}}{\gamma(X')}\ud x \\
  &=\int_{\reals^n}A(F|\tau(x))(x)A(G|\tau(x))\ud x \\
  &\lesssim\int_{\reals^n}C_q(F)(x)A(G)(x)\ud x=RHS,
\end{split}\end{equation*}
where the last estimate used the defining property of the stopping time \(\tau(x)\).
\end{proof}

\section{The Carleson-type characterization of \(\BMO\)}\label{sec:charBMO}

We are now ready for the proof of Theorem~\ref{thm:charBMO}. It follows the general structure of the scalar-valued argument from \cite{Stein}, Sections IV.4.3--4, now that vector-valued versions of the same tools are available.

\begin{proof}[Proof of \(f\in\BMO\Rightarrow C_q(F)\in L^{\infty}\)]
Since \(C_q(F)(x)\) is a non-decreasing function of \(q\in(0,\infty)\), we may assume that \(q\in(1,\infty)\).
We fix a ball \(B\), split the function as
\begin{equation*}
  f=(f-f_{3B})1_{3B}+(f-f_{3B})1_{(3B)^c}+f_{3B}
  =:f_1+f_2+f_3,
\end{equation*}
and write \(F_i(x,t):=f_i*\psi_t(x)\).

For \(F_1\), we have
\begin{equation*}\begin{split}
  &\Big(\frac{1}{\abs{B}}\int_B A(F_1|r(B))^q(y)\ud y\Big)^{1/q}
   \leq\Big(\frac{1}{\abs{B}}\int_{\reals^n} A(F_1)^q(y)\ud y\Big)^{1/q} \\
  &=\abs{B}^{-1/q}\Norm{A(F_1)}{L^q(\reals^n)}\lesssim\abs{B}^{-1/q}\Norm{f_1}{L^q(\reals^n;X)}
    \lesssim\Norm{f}{\BMO(\reals^n;X)},
\end{split}\end{equation*}
where we used Theorem~\ref{thm:LpTp}, and hence the UMD property.

Concerning \(F_2\), we can estimate
\begin{equation}\label{eq:F2}\begin{split}
  &\Norm{F_2\cdot 1_{\cone^{r(B)}(y)}}{\gamma(X)} \\
  &=\BNorm{(u,t)\mapsto\int_{(3B)^c}[f(z)-f_{(3B)^c}]\psi_t(u-z)\ud z\, 1_{\cone^{r(B)}(y)}(u,t)}{\gamma(X)} \\
  &\leq\int_{(3B)^c}\Norm{(u,t)\mapsto[f(z)-f_{(3B)^c}]\psi_t(u-z) 1_{\cone^{r(B)}(y)}(u,t)}{\gamma(X)}\ud z \\
  &=\int_{(3B)^c}\norm{f(z)-f_{(3B)^c}}{X}\cdot\Norm{\psi_t(u-z) 1_{\cone^{r(B)}(y)}(u,t)}{
    L^2(\tfrac{\ud u\ud t}{t^{n+1}})}\ud z.
\end{split}\end{equation}
For \(z\in(3B)^c\), \(y\in B\), and \((u,t)\in\cone^{r(B)}(y)\), there holds
\begin{equation*}
\begin{split}
  \abs{z-u}
  &\geq\abs{z-c(B)}-\abs{c(B)-y}-\abs{y-u} \\
  &\geq\abs{z-c(B)}-2r(B)\geq 3^{-1}\abs{z-c(B)},
\end{split}
\end{equation*}
thus
\begin{equation*}
  \abs{\psi_t(u-z)}\lesssim t^{-n}\Big(1+\frac{\abs{u-z}}{t}\Big)^{-(n+1)}
  \lesssim \frac{t}{\abs{z-c(B)}^{n+1}},
\end{equation*}
and hence the \(L^2\) norm on the right of \eqref{eq:F2} may be bounded by
\begin{equation*}
  \Big(\int_0^{r(B)}\int_{B(x,t)}\abs{\psi_t(u-z)}^2\ud y \frac{\ud t}{t^{n+1}}\Big)^{1/2}
  \lesssim \frac{r(B)}{\abs{z-c(B)}^{n+1}}.
\end{equation*}

It follows that
\begin{equation*}\begin{split}
  &\Norm{F_2\cdot 1_{\cone^{r(B)}(y)}}{\gamma(X)}
  \lesssim\int_{(3B)^c}\norm{f(z)-f_{3B}}{X}\frac{r(B)}{\abs{z-c(B)}^{n+1}}\ud z
  \lesssim\Norm{f}{\BMO(X)},
\end{split}\end{equation*}
for all \(y\in B\), and in particular the same upper bound holds for the \(L^q\) average of the left-side quantity over the ball \(B\). This argument for \(F_2\), as a matter of fact, did not employ any special properties of the Banach space \(X\).

Finally, since \(\int\psi(y)\ud y=0\) we have
\(f_3*\psi_t(x)\equiv 0\), so the estimate for \(F_3\) is trivial.
\end{proof}

\begin{proof}[Proof of \(C_q F\in L^{\infty}\Rightarrow f\in\BMO\)]
We establish this by invoking the (vector-valued) \(H^1\)--\(\BMO\)-duality. 
Consider a function \(g\) in the following dense subspace of the Hardy space \(H^1(\reals^n;X')\): We assume that \(g\) is compactly supported, bounded, and takes its values in a finite-dimensional subspace of \(X'\).
Due to the non-degeneracy of \(\psi\), there is a complementary Schwartz function \(\phi\) such that \eqref{eq:complementary} holds. Then we have
\begin{equation*}
  \int_{\reals^n}\pair{f(x)}{g(x)}\ud x
  =\int_0^{\infty}\int_{\reals^n}
    \pair{f*\psi_t(x)}{g*\phi_t(x)}\ud x\frac{\ud t}{t},
\end{equation*}
which identity, for our choice of \(g\), is a simple consequence of
the corresponding result for scalar functions \(f\) and \(g\)
found in~\cite{Stein}, Sections~IV.4.4.1--2. Denoting \(G(y,t):=g*\phi_t(y)\), it follows from Theorem~\ref{thm:duality} and Corollary~\ref{cor:H1T1} that
\begin{equation*}\begin{split}
  &\Babs{\int_{\reals^n}\pair{f(x)}{g(x)}\ud x}
  \lesssim\int_{\reals^n}C_q(F)(x)A(G)(x)\ud x \\
  &\leq\Norm{C_q(F)}{L^{\infty}(\reals^n)}\Norm{A(G)}{L^1(\reals^n)}
   \lesssim\Norm{C_q(F)}{L^{\infty}(\reals^n)}\Norm{g}{H^1(\reals^n;X')}.
\end{split}\end{equation*}
Thus \(f\) acts as a bounded functional on \(H^1 (\reals^n;X')\) via the natural duality, which implies that \(f\in\BMO(\reals^n;X)\), since \(H^1 (\reals^n;X')'\eqsim\BMO(\reals^n;X)\) when the Banach space \(X\) is reflexive, in particular when it is UMD (see~\cite{Blasco}).
\end{proof}

\section{Carleson embedding and paraproducts}\label{sec:Carleson}

In this section we first develop some further inequalities involving the \(A\), \(C\) and \(N\) functionals, which are then applied to prove Theorem~\ref{thm:paraproduct}. The inequality behind that theorem is contained in the ``Carleson embedding theorem'', which we give next.

In its statement we use the notion of \emph{type} \(q\in(0,2]\) of a Banach space \(Y\), defined by the requirement that
\begin{equation}\label{eq:type}
  \Exp_{\radem}\bnorm{\sum_{k=1}^N\radem_k y_k}{Y}
  \lesssim\Big(\sum_{k=1}^N\norm{y_k}{Y}^q\Big)^{1/q}
\end{equation}
for all \(N\in\ints_+\) and \(y_1,\ldots,y_N\in Y\), where \(\radem_k\) are independent Rademacher variables (i.e., distributed according to the law \(\prob(\radem_k=+1)=\prob(\radem_k=-1)=1/2\)) and \(\Exp_{\radem}\) the corresponding mathematical expectation.  

The estimate \eqref{eq:type} is trivial for \(q\in(0,1]\), and in fact this case of Theorem~\ref{thm:Carleson} would suffice for the proof of Theorem~\ref{thm:paraproduct}. However, we have chosen to consider a general \(q\in(0,2]\) in order to allow easier comparison to the scalar-valued case, where \(\complex\) does have type \(2\) and moreover the Carleson functional \(C_2\) is the classical choice. Indeed, the case \(X=Y=\bddlin(X,Y)=\complex\) and \(q=2\) of the following theorem is given in \cite{CMS}, Remark (b) on p.~320.

In the proof, we will need the type $q$ property of the space $\gamma(H,Y)$ instead of that of~$Y$ but these are actually equivalent. Indeed, this follows from the well-known fact that $L^2(\Omega;Y)$ inherits the type $q$ property of $Y$, and $Y$ is isomorphic to a subspace of $\gamma(H,Y)$, which is isomorphic to a subspace of $L^2(\Omega,Y)$.

\begin{theorem}\label{thm:Carleson}
Let \(F:\reals^{n+1}_+\to X\), \(G:\reals^{n+1}_+\to\bddlin(X,Y)\) be measurable.
Let \(\beta>\alpha>0\), \(0<q<p\leq\infty\) and \(Y\) have type \(q\). Then
\begin{equation*}
  \Norm{C_q^{(\alpha)}(G\cdot F)}{L^p(\reals^n)}
  \lesssim\Norm{N^{(\beta)}(G)}{L^p(\reals^n)}
  \Norm{C_q^{(\alpha)}(F)}{L^{\infty}(\reals^n)}.
\end{equation*}
\end{theorem}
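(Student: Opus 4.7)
The plan is to establish the pointwise estimate
\begin{equation*}
  C_q^{(\alpha)}(G\cdot F)(x)\lesssim M_q\bigl(N^{(\beta)}(G)\bigr)(x)\cdot\Norm{C_q^{(\alpha)}(F)}{L^{\infty}(\reals^n)},
\end{equation*}
with $M_q h:=(M(h^q))^{1/q}$ the $L^q$-variant of the Hardy--Littlewood maximal operator. Since $p>q$, $M$ is bounded on $L^{p/q}$, and hence $\Norm{M_q h}{L^p}\lesssim\Norm{h}{L^p}$; applied with $h=N^{(\beta)}(G)$ this delivers the theorem.

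\textbf{Level-set Whitney decomposition.} Fix $x\in\reals^n$ and a ball $B\owns x$. For each $k\in\ints$ let $O_k:=\{N^{(\beta)}(G)>2^k\}$, which is open by the lemma in Section~\ref{sec:AC}. Whitney-decompose $O_k=\bigsqcup_j Q_{k,j}$, and for each cube pick a witness $y_{k,j}\in O_k^c$ with $d(y_{k,j},Q_{k,j})\lesssim\diam Q_{k,j}$, so that by the definition of $N^{(\beta)}$,
\begin{equation*}
  \gbound\bigl(\{G(z,t):(z,t)\in\cone_{\beta}(y_{k,j})\}\bigr)\leq 2^k.
\end{equation*}
Exploiting the slack $\beta>\alpha$, a Whitney-tent construction in the spirit of \cite{CMS} partitions $\reals^{n+1}_+$ into regions $R_{k,j}$ enjoying both $R_{k,j}\subset\cone_{\beta}(y_{k,j})$ and projection onto $\reals^n$ contained in a bounded dilate $\tilde Q_{k,j}$ of $Q_{k,j}$.

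\textbf{Type $q$ and summation.} On each piece, Proposition~\ref{prop:multiplier} yields $\Norm{G\cdot F\cdot 1_E}{\gamma(Y)}\leq 2^k\Norm{F\cdot 1_E}{\gamma(X)}$ for $E\subset R_{k,j}$. The type $q$ property of $Y$ (equivalently of $\gamma(L^2;Y)$, as noted before the statement) then gives, for the disjoint decomposition $\cone^{r(B)}_\alpha(y)=\bigsqcup_{k,j}(\cone^{r(B)}_\alpha(y)\cap R_{k,j})$,
\begin{equation*}
  A^{(\alpha)}(G\cdot F|r(B))(y)^q\lesssim\sum_{k,j}2^{kq}\Norm{F\cdot 1_{\cone^{r(B)}_\alpha(y)\cap R_{k,j}}}{\gamma(X)}^q.
\end{equation*}
Averaging over $y\in B$ and observing that the indicator forces $y$ into an enlargement of $\tilde Q_{k,j}$, the defining property of $\Norm{C_q^{(\alpha)}(F)}{L^{\infty}}$ on a ball enclosing the interaction region dominates the inner average by $\frac{|\tilde Q_{k,j}\cap cB|}{|B|}\Norm{C_q^{(\alpha)}(F)}{L^{\infty}}^q$. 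Summation via bounded overlap of $\{\tilde Q_{k,j}\}_j$ inside a dilate of $O_k$ and the layer-cake identity produce
\begin{equation*}
  \sum_{k,j}2^{kq}\frac{|\tilde Q_{k,j}\cap cB|}{|B|}\lesssim\frac{1}{|B|}\int_{cB}N^{(\beta)}(G)^q\,\ud y\lesssim M_q\bigl(N^{(\beta)}(G)\bigr)(x)^q,
\end{equation*}
completing the pointwise estimate.

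\textbf{Main obstacle.} The principal technical issue is the construction of $\{R_{k,j}\}$ with the two competing containment properties, together with the local Carleson-type sub-estimate for the $F$-piece against each $\tilde Q_{k,j}$---particularly when $\ell(Q_{k,j})$ is comparable to or larger than $r(B)$, where the truncation of the cone at height $r(B)$ must be combined with the geometric slack afforded by $\beta>\alpha$ to make the argument close.
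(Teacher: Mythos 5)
Your reduction to the pointwise bound \(C_q^{(\alpha)}(G\cdot F)\lesssim M_q(N^{(\beta)}(G))\cdot\Norm{C_q^{(\alpha)}(F)}{L^{\infty}}\), the use of the level sets \(O_k=\{N^{(\beta)}(G)>2^k\}\) with Whitney cubes and witness points, and the type-\(q\) summation over disjointly supported pieces (with the Rademacher/unimodular-invariance device) all match the paper's argument. The gap is exactly the step you flag as the ``main obstacle'', and it is not a routine technicality: a partition \(\{R_{k,j}\}\) of \(\reals^{n+1}_+\) satisfying both \(R_{k,j}\subset\cone_{\beta}(y_{k,j})\) for a single witness \(y_{k,j}\notin O_k\) and projection inside a bounded dilate of \(Q_{k,j}\) does not exist. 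A cone \(\cone_{\beta}(y)\) contains points \((z,t)\) with \(t\to 0\) only for \(z\to y\), so whichever piece of the partition covers the low-\(t\) region above \(Q_{k,j}\setminus O_{k+1}\) cannot lie in any single \(\beta\)-cone based near \(Q_{k,j}\). In the scalar case this is harmless, since one only needs the pointwise bound \(\abs{G(z,t)}\leq 2^{k+1}\), available separately for each \((z,t)\) from some nearby good point; but in the vector-valued setting Proposition~\ref{prop:multiplier} requires the \(\gbound\) of the entire collection \(\{G(z,t):(z,t)\in E\}\), and the \(\gbound\) of a union is not controlled by the bounds on its members. So each piece \(E\) really must sit inside one cone over one good point, and your two containment requirements are incompatible with covering all of \(\reals^{n+1}_+\).

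The paper's resolution is to abandon the global partition and decompose each cone separately, from below: after localizing \(G_0=G\cdot 1_{(1+\alpha)B\times(0,r(B))}\), it introduces base-point-dependent stopping times \(\tau_k(x)=\inf\{\tau>0:\gbound\{G_0(y,t):(y,t)\in\cone_{\alpha}(x),\ t\geq\tau\}\leq 2^k\}\) and splits \(\cone_{\alpha}(x)\) into the layers \(\cone_{\alpha}^{\tau_k(x)}(x)\setminus\cone_{\alpha}^{\tau_{k+1}(x)}(x)\). The required \(\gbound\) bound on each layer is then immediate from the definition of the stopping time --- no witness point is needed for it --- and the witness points together with the aperture gap \(\beta>\alpha\) enter only to prove the upper bound \(\tau_k(x)\lesssim\diam(Q_{kj})(\beta-\alpha)^{-1}\) for \(x\in Q_{kj}\); the truncation from below at height \(\eqsim\diam(Q_{kj})\) removes precisely the low-\(t\) part that ruins your containment. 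That upper bound is what feeds \(A^{(\alpha)}(F|\tau_k(x))\) into the \(\Norm{C_q^{(\alpha)}(F)}{L^{\infty}}\) norm, after which the summation, layer-cake identity and maximal-function step proceed essentially as you wrote them.
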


In the vector-valued theory, Theorem~\ref{thm:Carleson} has a certain analogy to \cite{HMP}, Theorem~8.2, and the basic ingredient of the proof, contained in the following lemma, is related to and inspired by \cite{HMP}, Lemma~8.1. However, once the lemma is established, the way it is used to obtain Theorem~\ref{thm:Carleson} differs from the approach in \cite{HMP}, the most notable departure being that we are able to avoid any use of interpolation.

\begin{lemma}
Let \(F:\reals^{n+1}_+\to X\) and \(G:\reals^{n+1}_+\to\bddlin(X,Y)\) be measurable.
Let \(\beta>\alpha>0\) and \(q\in(0,2]\) be such that \(Y\) has type \(q\). Then for all balls \(B\) there holds
\begin{equation*}
  \int_B A^{(\alpha)}(G\cdot F|r(B))^q(x)\ud x
  \lesssim\int_{(1+\alpha+\beta)B} N^{(\beta)}(G)^q(x)\ud x\cdot\Norm{C_q^{(\alpha)}(F)}{L^{\infty}(\reals^n)}^q.
\end{equation*}
\end{lemma}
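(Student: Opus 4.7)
The plan is to combine the multiplier property (Proposition~\ref{prop:multiplier}) with a sawtooth decomposition of $\cone_\alpha^{r(B)}(x)$ based on dyadic level sets of $N^{(\beta)}(G)$, and to aggregate the shell contributions via the type $q$ inequality for $\gamma(H,Y)$ (inherited from $Y$). First, I would fix $\delta\in(\alpha,\beta)$ and, for each $k\in\ints$, set $O_k:=\{N^{(\beta)}(G)>2^k\}$ (open by the lower semicontinuity of $N^{(\beta)}(G)$) and $\hat O_k:=\{(y,t)\in\reals^{n+1}_+:B(y,\delta t)\subset O_k\}$. The critical observation is that for $(y,t)\notin\hat O_k$ there exists a sponsor $x'\in B(y,\delta t)\cap O_k^c$ with $(y,t)\in\cone_\beta(x')$ (since $\delta<\beta$); hence the family $\{G(z,s):(z,s)\in\cone_\beta(x')\}$ is $\gamma$-bounded by $N^{(\beta)}(G)(x')\leq 2^k$.

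On the disjoint shells $S_k:=\hat O_{k-1}\setminus\hat O_k$, the type $q$ inequality applied to $\cone_\alpha^{r(B)}(x)=\bigsqcup_k\cone_\alpha^{r(B)}(x)\cap S_k$ should yield
\[
A^{(\alpha)}(GF|r(B))^q(x)\lesssim\sum_k\|GF\cdot 1_{\cone_\alpha^{r(B)}(x)\cap S_k}\|_{\gamma(Y)}^q.
\]
For each shell I would cover $\cone_\alpha^{r(B)}(x)\cap S_k$ by a Vitali-type sub-collection of cones $\cone_\beta(x'_{k,j})$ with sponsors $x'_{k,j}\in O_k^c$; Proposition~\ref{prop:multiplier} on each such cone (giving Gauss-bound $\leq 2^k$), together with a second application of type $q$ to absorb the bounded geometric overlap of the selection, would yield
\[
\|GF\cdot 1_{\cone_\alpha^{r(B)}(x)\cap S_k}\|_{\gamma(Y)}\lesssim 2^k\,\|F\cdot 1_{\cone_\alpha^{r(B)}(x)\cap S_k}\|_{\gamma(X)}.
\]
A simple geometric fact feeds in here: since $\alpha<\delta$, whenever the above intersection is nonempty one has $x\in B(y,\alpha t)\subset B(y,\delta t)\subset O_{k-1}$, and all sponsors lie in $\tilde B:=(1+\alpha+\beta)B$.

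Integrating over $x\in B$ and exchanging sum and integral should then reduce the proof to a Carleson-type summation. Following the scalar template in \cite{CMS}, Remark~(b) on p.~320, the shell weight $\sum_k 2^{kq}\mathbf{1}_{S_k}(y,t)\eqsim\tilde N(y,t)^q$, where $\tilde N(y,t):=\inf_{x'\in B(y,\delta t)}N^{(\beta)}(G)(x')$, is nontangentially dominated by $N^{(\beta)}(G)^q$ on $\tilde B$; meanwhile the $L^q$-average control $\int_{B'}A^{(\alpha)}(F|r(B'))^q\leq|B'|\,\|C_q^{(\alpha)}(F)\|_\infty^q$ valid for every ball $B'$ (immediate from the definition of $C_q^{(\alpha)}$) plays the role of the ``Carleson constant'' for $F$. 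Combined, these produce the claimed right-hand side.

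The main obstacle I expect is the Vitali covering step: $\gamma$-boundedness is not preserved under arbitrary unions, so the sponsor cones must be selected with overlap uniformly controlled in $k$. The type $q$ property of $\gamma(H,Y)$ is precisely what absorbs this overlap cost into the $L^q$ sum, which is why the hypothesis on $Y$ enters through this specific quantity.
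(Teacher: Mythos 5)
Your overall architecture (dyadic level sets of $N^{(\beta)}(G)$, a shell decomposition absorbed by the type $q$ of $\gamma(H,Y)$, and the Carleson constant $\Norm{C_q^{(\alpha)}(F)}{L^{\infty}}$ at the end) matches the paper's, but the central step
\begin{equation*}
  \Norm{G\cdot F\cdot 1_{\cone_{\alpha}^{r(B)}(x)\cap S_k}}{\gamma(Y)}\lesssim 2^k\,\Norm{F\cdot 1_{\cone_{\alpha}^{r(B)}(x)\cap S_k}}{\gamma(X)}
\end{equation*}
has a genuine gap. To apply the multiplier result (Proposition~\ref{prop:multiplier}) you need a \emph{single} Gauss-bound $\lesssim 2^k$ for the whole family $\{G(y,t):(y,t)\in\cone_{\alpha}^{r(B)}(x)\cap S_k\}$, whereas the pointwise sponsors only give $\gamma$-boundedness of each sponsor cone separately. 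Gauss-bounds are not recoverable from a covering with bounded overlap: for a union of $N$ families the bound is in general the \emph{sum} of the individual bounds, and a cone reaching down to $t=0$ forces infinitely many sponsor cones per shell, so the Vitali selection does not produce the needed uniform bound. The proposed rescue --- splitting $1_{S_k\cap\cone}$ along the sponsor cones and applying type $q$ a second time --- goes the wrong way: it yields $\bigl(\sum_j\Norm{F\cdot 1_{P_{k,j}}}{\gamma(X)}^q\bigr)^{1/q}$ over countably many $j$, which for $q\leq 2$ is not dominated by $\Norm{F\cdot 1_{S_k\cap\cone}}{\gamma(X)}$; moreover the pieces $P_{k,j}$ are no longer truncated cones based at points of $B$, so they cannot be fed into $C_q^{(\alpha)}(F)$, which only controls \emph{averages over balls} of truncated-cone quantities. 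The final ``Carleson-type summation'' with the weight $\tilde N(y,t)^q$ suffers from the same Banach-space obstruction: it is a Fubini argument requiring $\Norm{F\cdot 1_E}{\gamma(X)}^q$ to be an integral over $E$ of a pointwise density, which holds for $X=\complex$, $q=2$ (the CMS remark you cite) but fails in general.

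The paper circumvents both problems by generating the shells with a stopping time adapted to each vertex $x$: with $G_0:=G\cdot 1_{(1+\alpha)B\times(0,r(B))}$, one sets $\tau_k(x)$ to be the smallest height above which $G_0$ restricted to $\cone_{\alpha}(x)$ has Gauss-bound $\leq 2^k$. Then the shell $\cone_{\alpha}^{\tau_k(x)}(x)\setminus\cone_{\alpha}^{\tau_{k+1}(x)}(x)$ carries the bound $2^{k+1}$ \emph{by construction} (no covering argument), and the surviving $F$-quantity is the genuine truncated cone $A^{(\alpha)}(F|\tau_k(x))(x)$. Only after integrating over $x\in B$ does one pass to level sets: $\{\tau_k>0\}\subseteq\{N^{(\beta)}G>2^k\}$, and a Whitney decomposition of the latter, together with $\beta>\alpha$, gives $\tau_k(x)\lesssim\diam(Q_{kj})$ on each Whitney cube, so that the average of $A^{(\alpha)}(F|\tau_k(x))^q$ over $Q_{kj}$ is controlled by $\Norm{C_q^{(\alpha)}(F)}{L^{\infty}}^q$. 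If you wish to retain a sawtooth flavour, you must at least replace your geometric shells $S_k$ by such stopping-time shells.
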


\begin{proof}
Let \(G_0:=G\cdot 1_{(1+\alpha)B\times(0,r(B))}\), so that for all \(x\in B\) we have
\begin{equation*}
  A^{(\alpha)}(F\cdot G|r(B))(x)= A^{(\alpha)}(F\cdot G_0)(x).
\end{equation*}
We introduce the stopping times
\begin{equation*}
  \tau_k(x):=\inf\{\tau>0:\gbound\{\abs{G_0(y,t)}:(y,t)\in\cone_{\alpha}(x);t\geq\tau\}\leq 2^k\}.
\end{equation*}
Then we can estimate
\begin{equation*}\begin{split}
  A^{(\alpha)}(F\cdot G_0)(x)
  &=\BNorm{\sum_{k\in\ints}F\cdot G_0\cdot
    1_{\cone_{\alpha}^{\tau_{k}(x)}(x)\setminus\cone_{\alpha}^{\tau_{k+1}(x)}(x)}}{\gamma(Y)} \\
  &=\Exp_{\radem}\BNorm{\sum_{k\in\ints}\radem_k
    F\cdot G_0\cdot 1_{\cone_{\alpha}^{\tau_{k}(x)}(x)\setminus\cone_{\alpha}^{\tau_{k+1}(x)}(x)}}{\gamma(Y)} \\
  &\lesssim\Big(\sum_{k\in\ints}\Norm{F\cdot G_0\cdot
     1_{\cone_{\alpha}^{\tau_{k}(x)}(x)\setminus\cone_{\alpha}^{\tau_{k+1}(x)}(x)}}{\gamma(Y)}^q\Big)^{1/q} \\
  &\lesssim\Big(\sum_{k\in\ints}2^{kq}\Norm{F\cdot 1_{\cone_{\alpha}^{\tau_k(x)}(x)}}{\gamma(Y)}^q\Big)^{1/q},
\end{split}\end{equation*}
where the type \(q\) assumption was used in the second to last step, and the definition of the stopping times in the last one. The introduction of the Rademacher variables \(\radem_k\) in the second step is in effect the invariance of the Gauss norm under the multiplication by a unimodular function; this step can be omitted for \(q\in(0,1]\).
Integrating over the ball \(B\), it follows that
\begin{equation*}\begin{split}
  \int_B A^{(\alpha)}(F\cdot G|r(B))^q(x)\ud x
  &\leq\int_{\reals^n} A^{(\alpha)}(F\cdot G_0)^q(x)\ud x \\
  &\lesssim\sum_{k\in\ints}\int_{\reals^n} 2^{kq}
       A^{(\alpha)}(F|\tau_k(x))^q(x) \ud x \\
  &\leq\sum_{k\in\ints}\int_{\{\tau_k>0\}} 2^{kq}
       A^{(\alpha)}(F|\tau_k(x))^q(x) \ud x.
\end{split}\end{equation*}

Observe that \(\{\tau_k>0\}=\{N^{(\alpha)}G_0>2^k\}\subseteq\{N^{(\beta)}G>2^k\}\) since \(\beta>\alpha\). Let us denote by \(\bigcup_{j=1}^{\infty}Q_{kj}\) a Whitney decomposition of the open set \(\{N^{(\beta)}G>2^k\}\). Then
\begin{equation*}\begin{split}
  \int_{\{\tau_k>0\}} 2^{kq} A^{(\alpha)}(F|\tau_k(x))^q(x) \ud x
  \leq 2^{kq}\sum_{j=1}^{\infty}\int_{Q_{kj}} A^{(\alpha)}(F|\tau_k(x))^q(x)\ud x
\end{split}\end{equation*}

By the properties of the Whitney decomposition, for each \(Q_{kj}\) we have
\begin{equation*}
 d(Q_{kj},\{N^{(\beta)}G_0\leq 2^k\})\leq 4\diam(Q_{kj}). 
\end{equation*}
We may hence pick an \(x_{kj}\) such that \(N^{(\beta)}G_0(x_{kj})\leq 2^k\) and \(\abs{x-x_{kj}}\leq 5\diam(Q_{kj})\) for all \(x\in Q_{kj}\). If \(x\in Q_{jk}\), \((y,t)\in\cone_{\alpha}(x)\), and \(t\geq 5\diam(Q_{jk})(\beta-\alpha)^{-1}\), then
\begin{equation*}
  \abs{y-x_{kj}}\leq\abs{y-x}+\abs{x-x_{kj}}<\alpha t+5\diam(Q_{kj})\leq\beta t,
\end{equation*}
and hence \((y,t)\in\cone_{\beta}(x_{kj})\). It follows that
\begin{equation*}
  \gbound\{G_0(y,t):(y,t)\in\cone_{\alpha}(x),t\geq 5\diam(Q_{jk})(\alpha-1)^{-1}\}
  \leq 2^k,
\end{equation*}
and hence \(\tau_k(x)\leq 5\diam(Q_{jk})(\beta-\alpha)^{-1}\). This implies that
\begin{equation*}
\begin{split}
  \int_{Q_{kj}} A^{(\alpha)}(F|\tau_k(x))^q(x)\ud x
  &\leq\int_{Q_{kj}} A^{(\alpha)}(F|5\diam(Q_{jk})(\beta-\alpha)^{-1})^q(x)\ud x \\
  &\lesssim \abs{Q_{kj}}\Norm{C_q^{(\alpha)}(F)}{L^{\infty}}^q.
\end{split}
\end{equation*}
Substituting back we obtain
\begin{equation*}\begin{split}
  \int_B A^{(\alpha)}(F\cdot G|r(B))^q(x)\ud x 
  &\lesssim\sum_{k\in\ints}2^{kq}\sum_{j=1}^{\infty}\abs{Q_{kj}}\cdot\Norm{C_q^{(\alpha)}(F)}{L^{\infty}}^q \\
  &=\sum_{k\in\ints}2^{kq}\abs{\{N^{(\beta)}G_0>2^k\}}\cdot\Norm{C_q^{(\alpha)}(F)}{L^{\infty}}^q \\
  &\eqsim\Norm{N^{(\beta)}G_0}{L^q}^q\Norm{C_q^{(\alpha)}(F)}{L^{\infty}}^q.
\end{split}\end{equation*}

Finally, we note that if \(d(x,B)\geq(\alpha+\beta)r(B)\), then \(\cone_{\beta}(x)\cap[(1+\alpha)B\times(0,r(B))]=\emptyset\) and hence \(N^{(\beta)}G_0(x)=0\). On the other hand, we always have \(N^{(\beta)}G_0(x)\leq N^{\beta}G(x)\). These observations imply that
\begin{equation*}
  \Norm{N^{(\beta)}G_0}{L^q(\reals^n)}
  \leq\Norm{N^{(\beta)}G}{L^q((1+\alpha+\beta)B)}
\end{equation*}
and complete the proof.
\end{proof}

Theorems~\ref{thm:Carleson} and \ref{thm:paraproduct} are now easy consequences.

\begin{proof}[Proof of Theorem~\ref{thm:Carleson}]
For each \(x\in\reals^n\) we have, using the previous lemma,
\begin{equation*}\begin{split}
  C_q^{(\alpha)}(G\cdot F)(x)
  &=\sup_{B\owns x}\Big(\frac{1}{\abs{B}}\int_B A^{(\alpha)}(G\cdot F|r(B))^q(y)\ud y\Big)^{1/q} \\
  &\lesssim\sup_{B\owns x}\Big(\frac{1}{\abs{B}}\int_{(1+\alpha+\beta)B}N^{(\beta)}(G)^q(y)\ud y\Big)^{1/q}
    \Norm{C_q^{(\alpha)}(F)}{L^{\infty}} \\
  &\lesssim M(N^{(\beta)}(G)^q)^{1/q}(x)\Norm{C_q^{(\alpha)}(F)}{L^{\infty}}.
\end{split}\end{equation*}
The proof is concluded by an application of the maximal inequality in \(L^r\) with \(r=p/q\in(1,\infty]\).
\end{proof}

\begin{proof}[Proof of Theorem~\ref{thm:paraproduct}]
We are given three functions
\begin{equation*}
  f\in\BMO(\reals^n;X),\qquad
  u\in L^p(\reals^n),\qquad
  g\in L^{p'}(\reals^n;X').
\end{equation*}
Let us define
\begin{equation*}
  F(x,t):=f*\psi_t(x),\qquad
  U(x,t):=u*\phi_t(x),\qquad
  G(x,t):=g*\tilde\psi_t(x),
\end{equation*}
where \(\tilde\psi(x):=\psi(-x)\).
By the obvious identification of \(\lambda\in\complex\) and \(\lambda I\in\bddlin(X)\), we think of \(U\) as an operator-valued function. Let \(\alpha>1\). Then we use Theorem~\ref{thm:duality} and Theorem~\ref{thm:Carleson} to obtain:
\begin{equation*}\begin{split}
  \iint_{\reals^{n+1}_+}&\abs{\pair{U(x,t)F(x,t)}{G(x,t)}}\frac{\ud x\ud t}{t}
  \lesssim\Norm{C_q(U\cdot F)}{L^{p}(\reals^n)}\Norm{A(G)}{L^{p'}(\reals^n)} \\
  &\lesssim\Norm{N^{(\alpha)}(U)}{L^p(\reals^n)}\Norm{C_q(F)}{L^{\infty}(\reals^n)}\Norm{A(G)}{L^{p'}(\reals^n)}.
\end{split}\end{equation*}
Recall that
\begin{equation*}
  \Norm{C_q(F)}{L^{\infty}(\reals^n)}\lesssim\Norm{f}{\BMO(\reals^n;X)},\qquad
  \Norm{A(G)}{L^{p'}(\reals^n)}\lesssim\Norm{g}{L^{p'}(\reals^n;X')}
\end{equation*}
by Theorem~\ref{thm:charBMO} and Theorem~\ref{thm:LpTp}. Finally, we observe the pointwise domination  \(N^{(\alpha)}(U)\lesssim M(u)\), from which
\begin{equation*}
  \Norm{N^{(\alpha)}(U)}{L^p(\reals^n)}
  \lesssim\Norm{M(u)}{L^p(\reals^n)}
  \lesssim\Norm{u}{L^p(\reals^n)}
\end{equation*}
follows by the maximal inequality.
This shows the convergence of the integral defining \(\pair{P(f,u)}{g}\) as well as the asserted norm estimate for \(P(f,u)\).
\end{proof}

It seems worthwhile formulating one more consequence of Theorem~\ref{thm:Carleson} in terms of the \(A\) and \(C\) functionals:

\begin{corollary}
Let \(F:\reals^{n+1}_+\to X\), \(G:\reals^{n+1}_+\to\bddlin(X,Y)\) be strongly measurable.
Let \(Y\) be a UMD space, \(q\in(0,\infty)\), and \(p\in(1,\infty)\). Then
\begin{equation*}
  \Norm{A(G\cdot F)}{L^p(\reals^n)}
  \lesssim 
  \Norm{N(G)}{L^p(\reals^n)}\Norm{C_q(F)}{L^{\infty}(\reals^n)}.
\end{equation*}
\end{corollary}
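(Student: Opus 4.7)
The plan is to chain together results already established in the paper: the $A$--$C$ inequality from Theorem~\ref{thm:AC}(a), the aperture independence of $A$ in $L^p$ from Theorem~\ref{thm:aperture}, and the Carleson embedding Theorem~\ref{thm:Carleson}. The UMD hypothesis on $Y$ lets us pass between $A$ and $C$ functionals for the product $G\cdot F$, and introducing an auxiliary exponent $q'$ bypasses the type-$q$ restriction that Theorem~\ref{thm:Carleson} imposes on $Y$.

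First I would fix $q'\in(0,\min(q,1)]$. Since $q'\le 1$, every Banach space---and in particular $Y$---trivially satisfies the type-$q'$ inequality \eqref{eq:type} by the triangle inequality, and since $p>1$ we also have $q'<p$. Next I fix apertures $\alpha<\beta$ with $\beta=1$, for instance $\alpha=\tfrac12$, so that the non-tangential maximal function produced by Theorem~\ref{thm:Carleson} is exactly $N^{(1)}(G)=N(G)$.

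Because $Y$ is UMD, Theorem~\ref{thm:aperture} applied to the $Y$-valued function $G\cdot F$ gives $\Norm{A(G\cdot F)}{L^p(\reals^n)}\eqsim\Norm{A^{(\alpha)}(G\cdot F)}{L^p(\reals^n)}$, and Theorem~\ref{thm:AC}(a) with exponent $q'$ yields
\[
  \Norm{A^{(\alpha)}(G\cdot F)}{L^p(\reals^n)}\lesssim\Norm{C_{q'}^{(\alpha)}(G\cdot F)}{L^p(\reals^n)}.
\]
Applying Theorem~\ref{thm:Carleson} with parameters $\alpha<\beta=1$, $q'$ and $p$ then gives
\[
  \Norm{C_{q'}^{(\alpha)}(G\cdot F)}{L^p(\reals^n)}\lesssim\Norm{N(G)}{L^p(\reals^n)}\,\Norm{C_{q'}^{(\alpha)}(F)}{L^\infty(\reals^n)}.
\]

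To finish, I would convert $C_{q'}^{(\alpha)}(F)$ into $C_q(F)=C_q^{(1)}(F)$ by two elementary pointwise inequalities: the monotonicity of $C_q^{(\alpha)}$ in the index $q$ noted just after its definition yields $C_{q'}^{(\alpha)}(F)\le C_q^{(\alpha)}(F)$, and the containment $\cone_{\alpha}^h(x)\subset\cone_1^h(x)$ gives $A^{(\alpha)}(F|h)\le A(F|h)$ pointwise, hence $C_q^{(\alpha)}(F)\le C_q(F)$. Concatenating the three displays with these pointwise bounds yields the claimed estimate. I do not expect a serious obstacle here; the only subtlety is the aperture bookkeeping, since Theorem~\ref{thm:Carleson} requires $\beta>\alpha$ strictly, so one cannot take $\alpha=\beta=1$ directly. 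Choosing $\alpha<1=\beta$ and absorbing the aperture mismatch on the $A$-side via Theorem~\ref{thm:aperture} handles this cleanly.
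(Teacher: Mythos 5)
Your proposal is correct and follows essentially the same route as the paper's own proof: reduce to an exponent $q'\le\min(q,1)$ so that the type hypothesis of Theorem~\ref{thm:Carleson} is automatic, then chain Theorem~\ref{thm:aperture}, Theorem~\ref{thm:AC}(a), and Theorem~\ref{thm:Carleson} with apertures $\alpha<\beta=1$, finishing with the elementary monotonicity of $C_q^{(\alpha)}$ in both $q$ and $\alpha$. The paper performs the reduction to $q\in(0,1]$ at the outset rather than at the end, but this is only a cosmetic difference.
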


\begin{proof}
Since \(C_q(F)\) is non-decreasing in \(q\in(0,\infty)\), we may again assume that \(q\in(0,1]\). Then \(Y\) has automatically type \(q\). Let \(\alpha\in(0,1)\). We apply Theorem~\ref{thm:aperture}, Theorem~\ref{thm:AC}, and Theorem~\ref{thm:Carleson} with \(\beta=1>\alpha>0\), in this order, to get
\begin{equation*}
  \Norm{A(G\cdot F)}{L^p}
  \lesssim\Norm{A^{(\alpha)}(G\cdot F)}{L^p}
  \lesssim\Norm{C^{(\alpha)}_q(G\cdot F)}{L^p}
  \lesssim\Norm{N(G)}{L^p}\Norm{C_q(F)}{L^{\infty}},
\end{equation*}
and this was the claim.
\end{proof}



\begin{thebibliography}{10}

\bibitem{Blasco}
O.~Blasco.
\newblock Hardy spaces of vector-valued functions: duality.
\newblock {\em Trans. Amer. Math. Soc.}, 308(2):495--507, 1988.

\bibitem{Bourgain}
J.~Bourgain.
\newblock Vector-valued singular integrals and the {$H\sp 1$}-{BMO} duality.
\newblock In {\em Probability theory and harmonic analysis (Cleveland, Ohio,
  1983)}, volume~98 of {\em Monogr. Textbooks Pure Appl. Math.}, pages 1--19.
  Dekker, New York, 1986.

\bibitem{CV}
W.~S. Cohn and I.~E. Verbitsky.
\newblock Factorization of tent spaces and {H}ankel operators.
\newblock {\em J. Funct. Anal.}, 175(2):308--329, 2000.

\bibitem{CMS}
R.~R. Coifman, Y.~Meyer, and E.~M. Stein.
\newblock Some new function spaces and their applications to harmonic analysis.
\newblock {\em J. Funct. Anal.}, 62(2):304--335, 1985.

\bibitem{Figiel}
T.~Figiel.
\newblock Singular integral operators: a martingale approach.
\newblock In {\em Geometry of Banach spaces (Strobl, 1989)}, volume 158 of {\em
  London Math. Soc. Lecture Note Ser.}, pages 95--110. Cambridge Univ. Press,
  Cambridge, 1990.

\bibitem{Hytonen:Tb}
T.~Hyt{\"o}nen.
\newblock An operator-valued {$Tb$} theorem.
\newblock {\em J. Funct. Anal.}, 234(2):420--463, 2006.

\bibitem{Hytonen:H1}
T.~Hyt{\"o}nen.
\newblock Vector-valued wavelets and the {H}ardy space {$H\sp 1(\Bbb R\sp
  n,X)$}.
\newblock {\em Studia Math.}, 172(2):125--147, 2006.

\bibitem{Hytonen:LPS}
T.~Hyt{\"o}nen.
\newblock Littlewood-{P}aley-{S}tein theory for semigroups in {UMD} spaces.
\newblock {\em Rev. Mat. Iberoam.}, 23(3):973--1009, 2007.

\bibitem{HMP}
T.~Hyt{\"o}nen, A.~McIntosh, and P.~Portal.
\newblock Kato's square root problem in {B}anach spaces.
\newblock {\em J. Funct. Anal.}, 254(3):675--726, 2008.

\bibitem{HNP}
T.~Hyt{\"o}nen, J.~van Neerven, and P.~Portal.
\newblock {Conical square function estimates in UMD Banach spaces and
  applications to \(H^{\infty}\)-functional calculi}.
\newblock {\em J. Anal. Math.}, 106:317--351, 2008.

\bibitem{HW:T1}
T.~Hyt{\"o}nen and L.~Weis.
\newblock A {$T1$} theorem for integral transformations with operator-valued
  kernel.
\newblock {\em J. Reine Angew. Math.}, 599:155--200, 2006.

\bibitem{KaiserWeis}
C.~Kaiser and L.~Weis.
\newblock Wavelet transform for functions with values in {UMD} spaces.
\newblock {\em Studia Math.}, 186(2):101--126, 2008.

\bibitem{KW}
N.~Kalton and L.~Weis.
\newblock {The \(H^{\infty}\)-functional calculus and square function
  estimates}.
\newblock Manuscript in preparation.

\bibitem{McConnell}
T.~R. McConnell.
\newblock On {F}ourier multiplier transformations of {B}anach-valued functions.
\newblock {\em Trans. Amer. Math. Soc.}, 285(2):739--757, 1984.

\bibitem{NW}
J.~van Neerven and L.~Weis.
\newblock Stochastic integration of functions with values in a {B}anach space.
\newblock {\em Studia Math.}, 166(2):131--170, 2005.

\bibitem{Pettis}
B.~J. Pettis.
\newblock On integration in vector spaces.
\newblock {\em Trans. Amer. Math. Soc.}, 44(2):277--304, 1938.

\bibitem{RdFRT}
J.~L. Rubio~de Francia, F.~J. Ruiz, and J.~L. Torrea.
\newblock Calder\'on-{Z}ygmund theory for operator-valued kernels.
\newblock {\em Adv. in Math.}, 62(1):7--48, 1986.

\bibitem{Stein}
E.~M. Stein.
\newblock {\em Harmonic analysis: real-variable methods, orthogonality, and
  oscillatory integrals}, volume~43 of {\em Princeton Mathematical Series}.
\newblock Princeton University Press, Princeton, NJ, 1993.

\end{thebibliography}

\end{document}